\documentclass[english]{article}
\usepackage[T1]{fontenc}
\usepackage[latin9]{inputenc}
\usepackage{amsthm}
\usepackage{amsmath}
\usepackage{amssymb}
\usepackage{esint}
\usepackage{verbatim}
\makeatletter
\numberwithin{equation}{section}
\numberwithin{figure}{section}
\newcommand{\lyxaddress}[1]{
\par {\raggedright #1
\vspace{1.4em}
\noindent\par}
}
\theoremstyle{plain}
\newtheorem{thm}{\protect\theoremname}[section]
  \theoremstyle{plain}
  \newtheorem*{thm*}{\protect\theoremname}
  \theoremstyle{plain}
  \newtheorem*{prop*}{\protect\propositionname}
  \theoremstyle{plain}
  \newtheorem{prop}[thm]{\protect\propositionname}
  \theoremstyle{plain}
  \newtheorem*{lem*}{\protect\lemmaname}

\usepackage{babel}

\makeatother

\usepackage{babel}
  \providecommand{\lemmaname}{Lemma}
  \providecommand{\propositionname}{Proposition}
  \providecommand{\theoremname}{Theorem}
\providecommand{\theoremname}{Theorem}

\begin{document}

\title{Quantitative Properties on the Steady States to A Schr$\ddot{o}$dinger-Poisson-Slater
System}

\author{Xiang Changlin}

\maketitle

\lyxaddress{Department of Mathematics and Statistics, PO Box 35 (MaD) FI-40014,
University of Jyv$\ddot{a}$skyl$\ddot{a}$, Finland. E-mail: changlin.c.xiang@jyu.fi}

\begin{abstract}
A relatively complete picture on the steady states of the following
Schr$\ddot{o}$dinger-Poisson-Slater (SPS) system
\[
\begin{cases}
-\Delta Q+Q=VQ-C_{S}Q^{2}, & Q>0\text{ in }\mathbb{R}^{3}\\
Q(x)\to0, & \mbox{as }x\to\infty,\\
-\Delta V=Q^{2}, & \text{in }\mathbb{R}^{3}\\
V(x)\to0 & \mbox{as }x\to\infty.
\end{cases}
\]
 is given in this paper: existence, uniqueness, regularity and asymptotic
behavior at infinity, where $C_{S}>0$ is a constant. To the author's
knowledge, this is the first uniqueness result on SPS system.
\end{abstract}

 \noindent\textbf{Keyword:} SPS system; existence; uniqueness; regularity;
asymptotic behavior;

 \noindent \textbf{Mathematics Subject
Classification(2000)} 35J50 35J60

\section{Introduction}

Due to its importance in various physical frameworks: gravitation,
plasma physics, semiconductor theory, quantum chemistry and so on
(see, e.g. \cite{Soler2003,Lieb1977,Mauser2001} and the reference
therein), the following Schr$\ddot{o}$dinger-Poisson-Slater (SPS)
system in terms of the wave function $\psi:\mathbb{R}^{3}\times[0,T)\to\mathbb{C}$
\[
\begin{cases}
i\frac{\partial\psi}{\partial t}=-\Delta_{x}\psi+V(x,t)\psi+C_{S}|\psi(x,t)|^{2\alpha}\psi, & \lim_{x\to\infty}\psi(x,t)\to0\\
\psi(x,t=0)=\psi_{0}(x),\\
-\Delta_{x}V=\epsilon|\psi|^{2}, & \lim_{x\to\infty}V(x,t)\to0
\end{cases}
\]
has been studied extensively in recent years, see
\cite{Soler2006,Soler2003,Soler2013,Mugnai2004,Mauser2001,Ruiz2006,Soler2004}
for instance. Here $\alpha$, $\epsilon$, $C_{S}$ are constants.,
such as $\epsilon=+1$ (repulsive case), $\epsilon=-1$ (attractive
case) depending on the type of interaction considered. The last
term (the Slater term $|\psi(x,t)|^{2\alpha}\psi$) is usually
considered to be a correction to the nonlocal term $V\psi$, for
example, $\alpha=\frac{1}{3}$, which is called the Slater
correction, or $\alpha=\frac{2}{3}$, which is named as Dirac
correction. The interested reader is recommended to find more
background in the reference, see \cite{Soler2013,Soler2004}.

Based on the fact that the total $mass$
\[
M[\psi]:=\int_{\mathbb{R}^{3}}|\psi(x,t)|^{2}dx
\]
 and the energy functional
\[
E[\psi]=\frac{1}{2}\int_{\mathbb{R}^{3}}|\nabla\psi|^{2}+\frac{\epsilon}{4}\int_{\mathbb{R}^{3}}\left(\frac{1}{4\pi}|x|^{-1}\ast|\psi|^{2}\right)|\psi|^{2}+\frac{C_{S}}{2\alpha+2}\int_{\mathbb{R}^{3}}|\psi|^{2\alpha+2}
\]
 are preserved along the time evolution, the standing wave solutions,
whose interest lies in stability properties stated in terms of the
energy and the mass highlighted in \cite{Lions1982} for example,
obtain a particular importance in the study of the SPS system, i.e.,
solutions of the form
\[
\psi(x,t)=e^{i\lambda t}Q(x)
\]
 with $\lambda>0$ and $Q\in L^{2}(\mathbb{R}^{3})$, which lead us
to the following system

\begin{equation}
\begin{cases}
-\Delta Q+VQ+C_{S}|Q|^{2\alpha}Q=-\lambda Q, & \text{in }\mathbb{R}^{3}\\
-\Delta V=\epsilon|Q|^{2}, & \text{in }\mathbb{R}^{3}\\
V(x)\to0 & \mbox{as }x\to\infty.
\end{cases}\label{eq:SPS system}
\end{equation}

Let $I_{2}=\frac{1}{4\pi}|x|^{-1}$ be the Newtonian potential in
the Euclidean space $\mathbb{R}^{3}$. It is well known that the self-consistent
potential $V$ can be then rewritten explicitly in the form of convolution
by the Newtonian potential $I_{2}$ as
\[
V(x)=\epsilon I_{2}\ast|Q|^{2}(x)=\epsilon\int_{\mathbb{R}^{3}}\frac{|Q(y)|^{2}}{4\pi|x-y|}dy,
\]
so that the SPS system (\ref{eq:SPS system}) is reduced to a singer
nonlinear and nonlocal Schrödinger-type equation

\begin{equation}
-\Delta Q+\lambda Q=-\epsilon\left(I_{2}\ast Q^{2}\right)Q-C_{S}|Q|^{2\alpha}Q,\label{eq:single SPS equation}
\end{equation}
 which is a special case of Schrödinger-Maxwell equations (\cite{Mugnai2004}).

The existence of standing waves has been studied from various perspectives
in the vast mathematical literature. For example, in \cite{Ruiz2006},
the author investigated the existence of critical points of the functional
$E[\psi]+\lambda M[\psi]$ on the Sobolev space $H^{1}(\mathbb{R}^{3})$,
see also the reference therein. Nevertheless, due to the variational
structure of the equation (\ref{eq:single SPS equation}), solutions
of (\ref{eq:single SPS equation}) are usually treated as kind of
critical points of some energy functionals. From a physical point
of view, the most interesting critical points, the so called $steady$
$states$, are minimizers of the problem

\[
I_{M}=\inf\left\{ E[u];u\in H^{1}\left(\mathbb{R}^{3}\right);|u|_{2}=M\right\} .
\]
In the repulsive case $\epsilon=+1,C_{S}<0$, many results have been
known. We only mention some of them. In \cite{reinhard1994}, a negative
answer was given to $\alpha=0$. In \cite{Lions1992}, a positive
answer is given to $\alpha\in(0,1/2)$ with $M>0$ small. In \cite{Soler2004},
as part of its results, a positive answer was obtained to the Slater
correction case: $\alpha=1/3$. Quite recently the authors in \cite{Soler2013}
applied the well known concentration-compactness method to study the
existence of the minimizer of $I_{M}$ systematically for all $\alpha\in(0,2)$,
and a very complete answer on this question was given there. See also
the reference therein to obtain more information on the known result.

There are mainly one obstacle which brings difficulty in the repulsive
case: the variational problem is translation invariant which causes
the loss of compactness of minimizing sequence. Thus the concentration-compactness
method supplies a suitable framework to tackle this problem. However,
in the attractive case $\epsilon=-1,C_{S}>0$, even though the problem
is sill translation invariant but with another good structure which
makes the study easier: unlike in the repulsive case, there is no
competition between the nonlocal term $\iint I_{2}(x-y)|u(x)|^{2}|u(y)|^{2}dxdy$
and the kinetic energy $\int|\nabla u|^{2}$ , thus the powerful rearrangement
method is also available.

In this paper, we shall limit ourselves to the attractive case with
positive contribution given by the Slater term, i.e., $\epsilon=-1,C_{S}>0$.
The exact value of $C_{S}$ is not important but since it can not
be normalized, we shall always leave it in the equation. Moreover,
limited to the uniqueness argument on the minimizer, we will only
consider the case $\alpha=\frac{1}{2}$. Under these considerations
we are able to draw a relatively complete picture about the steady
states: existence, uniqueness, smoothness and asymptotic behavior.
Most of the results are allowed to extend to general cases, which
will be pointed out whenever it is possible. Our main result reads
\begin{thm}
Let $\epsilon=-1,C_{S}>0,\alpha=\frac{1}{2}$, $M>0$. $I_{M}$ is
attained at a minimizing function $Q$ with the following properties:

(1) (radiality and monotonicity) there exists a decreasing function
$Q_{0}:[0,\infty)\to(0,\infty)$ such that $Q(x)=Q_{0}(|x-x_{0}|)$
for some $x_{0}\in\mathbb{R}^{3}$;

(2) (regularity) modulo some scaling transformations, $Q$ satisfies
the Euler-Lagrange equation for some constant $C_{S}$
\[
\begin{cases}
-\Delta Q+Q=\left(\int_{\mathbb{R}^{3}}\frac{1}{4\pi|x-y|}Q(y)^{2}dy\right)Q-C_{S}Q^{2}, & Q>0\text{ in }\mathbb{R}^{3},\\
Q\in H^{1}(\mathbb{R}^{3})\cap C^{\infty}(\mathbb{R}^{3}).
\end{cases}
\]
 (3) (uniqueness) modulo translations, the minimizer $Q$ is unique.
\end{thm}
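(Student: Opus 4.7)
The plan is to dispatch the three assertions in turn: (1) existence together with radial monotonicity by the direct method combined with Schwarz symmetrization; (2) regularity by a standard elliptic bootstrap once the Euler--Lagrange equation is in hand; (3) uniqueness, which is the genuinely new content, by exploiting the special structure of the attractive case when $\alpha=\tfrac{1}{2}$.

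For part (1), I would first check $I_{M}>-\infty$. Using Hardy--Littlewood--Sobolev, the nonlocal term is controlled by $\|u\|_{12/5}^{4}$, while the Slater term $\int |u|^{3}$ is controlled by interpolation; combined with the mass constraint $\|u\|_2=M$ this makes $E$ coercive on the constraint manifold. Take a minimizing sequence $\{u_n\}$ and replace each $u_n$ by its Schwarz symmetric decreasing rearrangement $u_n^{*}$: the $L^p$-norms are preserved, $\|\nabla u_n^{*}\|_2\le\|\nabla u_n\|_2$ by P\'olya--Szeg\H o, and Riesz's rearrangement inequality shows the attractive nonlocal term only gains. Hence $\{u_n^{*}\}$ is again a minimizing sequence, bounded in $H^{1}_{rad}(\mathbb{R}^{3})$; the Strauss compact embedding $H^{1}_{rad}\hookrightarrow L^{p}$ for $p\in(2,6)$ delivers a strong $L^{p}$ subsequential limit $Q$. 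Weak lower semicontinuity of the Dirichlet energy and continuity of the remaining terms (via HLS in $L^{12/5}$) identify $Q$ as a minimizer, and strong $L^{p}$ convergence for some $p>2$ combined with the radial decreasing structure rules out mass loss, so $\|Q\|_{2}=M$. Translation invariance then gives the general form $Q(\cdot-x_{0})$.

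For part (2), the Euler--Lagrange equation follows from computing the first variation of $E$ under the mass constraint, producing a Lagrange multiplier $\lambda$; testing against $Q$ and using the sign of the nonlocal minus Slater contribution shows $\lambda>0$ for admissible $M$, and the rescaling $Q(x)\mapsto aQ(bx)$ absorbs $\lambda$ into the coefficient $1$ on $Q$ while replacing $C_S$ by an equivalent positive constant, yielding the stated equation. For smoothness, $Q\in H^{1}\cap L^{\infty}$ (boundedness follows from the Strauss radial decay estimate off the origin together with a short Moser iteration at the origin), so $I_{2}\ast Q^{2}\in L^{\infty}\cap C^{0,\gamma}_{loc}$, the right-hand side of the equation is locally H\"older, Schauder estimates give $Q\in C^{2,\gamma}$, and differentiating the equation bootstraps to $C^{\infty}$. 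Positivity in $\mathbb{R}^3$ is a direct application of the strong maximum principle to $(-\Delta+1+C_S Q)Q=(I_{2}\ast Q^{2})Q$, since $Q\ge 0$ and $Q\not\equiv 0$.

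Part (3) is the hard step and is almost certainly where $\alpha=\tfrac{1}{2}$ enters essentially. By (1), two minimizers may be taken radial and centred at the origin, so the task is to show two radial positive minimizers coincide. The approach I would try first is a density reformulation: set $\rho=u^{2}$, so that the Slater contribution becomes $\tfrac{C_S}{3}\int\rho^{3/2}$, which is strictly convex in $\rho$; the attractive Coulomb energy $-\tfrac{1}{2}\iint I_{2}(x-y)\rho(x)\rho(y)\,dx\,dy$ is (weakly) concave in $\rho$, but on radial decreasing densities one can absorb its bad sign into part of the kinetic energy via a Benguria--Brezis--Lieb type rewriting, recovering a functional of $\rho$ that is strictly convex on the relevant class, which forces $Q_1=Q_2$. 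The main obstacle, and the step I expect to carry most of the technical work, is producing such a rigorous convex reformulation that is simultaneously strictly convex and equivalent to the original constrained problem on the class containing all minimizers. If direct convexity fails, the fallback is to prove non-degeneracy of the linearized operator at a ground state, $\ker\mathcal{L}=\mathrm{span}\{\partial_{x_{i}}Q\}$, via ODE/angular-decomposition analysis using the radial structure and sharp decay of $Q$ and $V$, and then combine this with an implicit function/continuation argument in $C_S$ starting from a limit where uniqueness is already known (e.g.\ $C_S\to 0$, the Choquard equation, whose ground state is unique by Lieb) to propagate uniqueness to the full range of parameters.
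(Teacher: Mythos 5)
There are two genuine gaps. First, in part (1), your mechanism for excluding mass loss does not work: strong convergence of the rearranged minimizing sequence in $L^{p}$ for $p\in(2,6)$ together with the radial decreasing structure does \emph{not} prevent the $L^{2}$ mass from escaping. A radial, decreasing, $H^{1}$-bounded sequence such as $u_{n}(x)=n^{-3/2}\phi(x/n)$ keeps its full $L^{2}$ norm while converging to zero in every $L^{p}$ with $p>2$, so the weak limit $Q$ may a priori satisfy $\|Q\|_{2}=\alpha<M$. The paper closes this by strict monotonicity of the ground-state energy: the scaling $u\mapsto M^{4}u(M^{2}x)$ gives the exact identity $I_{M}=M^{6}I_{1}$ with $I_{1}<0$, hence $I_{M}<I_{\alpha}$ for $0<\alpha<M$; combined with the Brezis--Lieb type splitting $E(u_{k})=\tfrac12\int|\nabla v_{k}|^{2}+E(Q)+o(1)$ one gets $E(Q)\le I_{M}<I_{\alpha}$, contradicting $E(Q)\ge I_{\alpha}$ unless $\alpha=M$. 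You need this (or an equivalent subadditivity statement) explicitly; note that the clean identity $I_{M}=M^{6}I_{1}$ is itself one of the places where $\alpha=\tfrac12$ is used.

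Second, part (3) is not proved. Your primary route, convexity in the density $\rho=u^{2}$, is very unlikely to succeed here: in the attractive case the Coulomb term enters the energy as $-\tfrac14\iint I_{2}(x-y)\rho(x)\rho(y)\,dx\,dy$, which is \emph{concave} in $\rho$, so the Benguria--Brezis--Lieb convexity mechanism (which relies on the repulsive sign $+D(\rho,\rho)$) is not available, and you give no substitute; your fallback (non-degeneracy of the linearization plus continuation in $C_{S}$ from the Choquard limit) is a program, not an argument. The paper instead follows Lieb's ODE method: by Newton's theorem the nonlocal term becomes $-\bigl(I_{2}\ast Q^{2}\bigr)(r)=A_{Q}(r)-\lambda_{Q}$ with $A_{Q}(r)=\int_{0}^{r}K(r,s)Q^{2}(s)\,ds$ depending monotonically on the values of $Q$ up to radius $r$; one shows $\lambda_{Q}-1>0$, and the scaling $Q(r)=\mu^{2}P(\mu r)$ with $\mu^{2}=\lambda_{Q}-1$ normalizes the equation --- this is exactly where the quadratic Slater term (i.e.\ $\alpha=\tfrac12$) is essential, since $Q^{2}$ scales compatibly. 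Uniqueness then follows from a Sturm/Wronskian comparison: if $Q(0)>R(0)$ one propagates $Q>R$ and $A_{Q}>A_{R}$ to all of $(0,\infty)$ via the identity $\bigl(r^{n-1}(Q'R-QR')\bigr)'=r^{n-1}(A_{Q}-A_{R})QR+r^{n-1}(Q-R)QR$, and integrating this over $(0,\infty)$ yields $0<0$. None of this structure appears in your proposal, and without it the central claim of the theorem is unsupported.
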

As mentioned above, the existence result will be established in section
2 via a rearrangement argument combining together with concentration
compactness method.

It is a tough job to study the uniqueness of minimizers or positive
solutions (modulo translation) to both repulsive case and attractive
case of SPS system. Very little is known. Since the energy functional
is not convex, the conventional way to prove uniqueness does not apply.
Another way to show uniqueness is by virtue of the Euler-Lagrange
equation satisfied by minimizers, and ODE technique could be applied
once the radially symmetric property of minimizers is shown. However,
because of the competition between the kinetic term $\int_{\mathbb{R}^{3}}|\nabla u|^{2}$
and the nonlocal term $\int_{\mathbb{R}^{3}}\left(I_{2}\ast u^{2}\right)u^{2}$,
in the repulsive case it is not trivial to show the radiality of minimizers.
See \cite{Vladimir2012Radiality} for some part results on this.

Let us briefly discuss about the uniqueness of positive (radial) solutions
of the SPS equation here. Without considering the nonlocal term, there
has been a lot of results on the uniqueness of the scalar field equation

\[
\:-\Delta u+u=u^{p},\quad u>0\;\text{in }\mathbb{R}^{n},
\]
 and its generalizations, we only mention the paper \cite{Kwong1989Uinqueness}
here. For the SPS system, the difficulty mainly stems from the nonlocal
term, for which requires the global information of the solutions always.
Thus the local method, the commonly applied shooting method of ODE
is not available in general. Under the consideration of the nonlocal
term but without the Slater term, the first result is due to E.H.Lieb
who derived the uniqueness of positive $radially$ $symmetric$ solution
to the Choquard equation
\[
-\Delta u+u=\left(\int_{\mathbb{R}^{3}}\frac{1}{4\pi|x-y|}u(y)^{2}dy\right)u,\quad\text{in }\mathbb{R}^{3},\quad u(x)\to0\;\text{as }|x|\to\infty
\]
in \cite{Lieb1977Uniqueness}. His argument depends heavily on the
special structure of the equation, and seems impossible to extend
to the equation with general Choquard term $\left(\int_{\mathbb{R}^{3}}\frac{1}{4\pi|x-y|}|u(y)|^{p}dy\right)|u|^{p-2}u$
for $p>1$. But his method is allowed to extend to some higher dimensions,
say, $4$ and $5$ dimension, see \cite{Lenzmann2009Uniqueness,Moroz1999Uniqueness}
for example. Quite recently, Lieb's result was proved to be true for
positive solutions by Ma and Zhao in \cite{Mali2010Radiality}.

Contrary to the attractive case, a big difference is shown to the
repulsive case($\epsilon=+1,C_{S}<0$). In \cite{Ruiz2006} Ruiz showed
the following nonexistence and multiplicity result on positive radial
solutions
\begin{thm*}
\cite{Ruiz2006} Let $1<p\le2,\lambda>0$ and consider the equation
\[
\begin{cases}
-\Delta u+u+\lambda\phi u=|u|^{p-1}u, & \text{ in }\mathbb{R}^{3},\\
-\Delta\phi=u^{2}, & \phi\to0\text{ as }|x|\to\infty.
\end{cases}
\]
 Then (1) if $\lambda\ge1/4$, there is no nontrivial solution in
the space $H^{1}\times D^{1}$;

(2) if $\lambda$ small enough, there exist at least two positive
radial solutions.
\end{thm*}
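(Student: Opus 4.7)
The proof splits naturally into two parts requiring different techniques.

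For (1), my strategy is a Nehari--Pohozaev argument. For a putative nontrivial solution $(u,\phi)\in H^{1}(\mathbb{R}^{3})\times D^{1,2}(\mathbb{R}^{3})$, testing the equation against $u$ and using $\int|\nabla\phi|^{2}=\int\phi u^{2}$ gives the Nehari identity
\[
\int|\nabla u|^{2}+\int u^{2}+\lambda\int\phi u^{2}=\int|u|^{p+1}.
\]
Testing against $x\cdot\nabla u$, combined with the Pohozaev identity for $\phi$ in the nonlocal term, yields
\[
\tfrac{1}{2}\int|\nabla u|^{2}+\tfrac{3}{2}\int u^{2}+\tfrac{5\lambda}{4}\int\phi u^{2}=\tfrac{3}{p+1}\int|u|^{p+1}.
\]
Eliminating $\int|u|^{p+1}$ between them produces a linear relation among the three quadratic quantities. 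I would then pair this with a sharp functional inequality controlling $\lambda\int\phi u^{2}$ in terms of $\int|\nabla u|^{2}$ and $\int u^{2}$; the emergence of the threshold $1/4$ strongly suggests the three-dimensional Hardy inequality $\int u^{2}/|x|^{2}\leq 4\int|\nabla u|^{2}$ ultimately drives the argument, invoked after bounding $\phi$ pointwise (and passing to radial rearrangements if needed). The main obstacle is isolating the precise inequality whose saturation occurs exactly at $\lambda=1/4$ under the restriction $1<p\leq 2$.

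For (2), I would work in the radial subspace $H^{1}_{r}$, where Palais' symmetric criticality applies and the compact embedding $H^{1}_{r}\hookrightarrow L^{q}$, $2<q<6$, is available. For the energy
\[
J_{\lambda}(u)=\tfrac{1}{2}\|u\|_{H^{1}}^{2}+\tfrac{\lambda}{4}\int\phi_{u}u^{2}-\tfrac{1}{p+1}\|u\|_{p+1}^{p+1},
\]
the fibering map $t\mapsto J_{\lambda}(tu)=at^{2}+b(\lambda)t^{4}-ct^{p+1}$ has powers $2<p+1\leq 3<4$, so $J_{\lambda}(tu)\to+\infty$ as $t\to\infty$. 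For $\lambda$ small enough the $L^{p+1}$ term dominates the nonlocal quartic in an intermediate range, producing a strictly positive local maximum at some $t_{1}(u)>0$ and a strictly negative local minimum at $t_{2}(u)>t_{1}(u)$. I would extract the first solution as a local minimizer of $J_{\lambda}$ on the Nehari--Pohozaev-type manifold cut out by the second fibering zero, giving a radial $u_{1}$ with $J_{\lambda}(u_{1})<0$; positivity is arranged by replacing $|u|^{p-1}u$ with $u_{+}^{p}$ in the nonlinearity before the argument and invoking the maximum principle afterward. A second positive radial critical point at a strictly positive energy level is then produced by the mountain pass theorem applied between $0$ and $u_{1}$, the uniform positive barrier on small spheres coming from the quadratic-dominated regime near the origin.

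The main technical difficulty in (2) is verifying the Palais--Smale condition: the quartic $\int\phi_{u}u^{2}$ is not weakly continuous on $H^{1}(\mathbb{R}^{3})$, but in $H^{1}_{r}$ the compact embedding into $L^{12/5}$ together with the Hardy--Littlewood--Sobolev bound $\int\phi_{u}u^{2}\lesssim\|u\|_{12/5}^{4}$ restores convergence of the nonlocal term along bounded sequences. Boundedness of PS sequences in $H^{1}_{r}$, non-trivial because $J_{\lambda}$ is not coercive on the full space, must be extracted by combining $J_{\lambda}(u_{n})=O(1)$ and $\langle J'_{\lambda}(u_{n}),u_{n}\rangle=o(\|u_{n}\|)$ with the Pohozaev identity holding asymptotically along the sequence, which is the standard device for such non-coercive variational problems on unbounded domains.
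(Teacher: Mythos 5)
First, a point of comparison that matters here: the paper does not prove this statement at all. It is quoted from \cite{Ruiz2006} purely as background, to argue that uniqueness should not be expected in the repulsive case. So there is no internal proof to measure you against; I can only compare your proposal with Ruiz's original argument.

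For part (1) there is a genuine gap, and you have located it yourself (``the main obstacle is isolating the precise inequality''). The Nehari identity you wrote is the right starting point, but the Pohozaev identity is not needed and Hardy's inequality is not the mechanism --- nothing in the identities you derived produces a term $\int u^{2}/|x|^{2}$, so there is no way to bring Hardy into play. The inequality that actually generates the threshold $1/4$ comes from the Poisson equation itself: since $-\Delta\phi_{u}=u^{2}$ and $\int|\nabla\phi_{u}|^{2}=\int\phi_{u}u^{2}$,
\[
\int_{\mathbb{R}^{3}}|u|^{3}\,dx=\int_{\mathbb{R}^{3}}|u|\left(-\Delta\phi_{u}\right)dx=\int_{\mathbb{R}^{3}}\nabla|u|\cdot\nabla\phi_{u}\,dx\le\left(\int|\nabla u|^{2}\right)^{1/2}\left(\int\phi_{u}u^{2}\right)^{1/2},
\]
and Young's inequality $ab\le\frac{1}{4\lambda}a^{2}+\lambda b^{2}$ then gives
\[
\int_{\mathbb{R}^{3}}|u|^{3}\,dx\le\frac{1}{4\lambda}\int|\nabla u|^{2}+\lambda\int\phi_{u}u^{2}.
\]
For $p=2$ and $\lambda\ge1/4$ this contradicts the Nehari identity unless $u\equiv0$, because the term $\int u^{2}$ is strictly left over; for $1<p<2$ one first interpolates $\|u\|_{p+1}^{p+1}\le\|u\|_{2}^{4-2p}\|u\|_{3}^{3(p-1)}$ and absorbs the $L^{2}$ contribution as well. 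Without this inequality your plan for (1) does not close.

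For part (2) your sketch is essentially sound and close in architecture to Ruiz's proof: work in $H^{1}_{r}$, exploit the fibering structure $at^{2}+b(\lambda)t^{4}-ct^{p+1}$ with $2<p+1\le3<4$, obtain one solution at negative energy and a second by the mountain pass theorem, with compactness restored by the radial embedding and the Hardy--Littlewood--Sobolev bound on the nonlocal term. The main difference is that Ruiz obtains the first solution as a global minimizer after proving that $J_{\lambda}$ is coercive and bounded below for $p<2$ --- a fact which again rests on the same key inequality displayed above --- rather than by constrained minimization on a Nehari--Pohozaev manifold; his route avoids the delicate verification that your manifold is a natural constraint. Your concern about boundedness of Palais--Smale sequences is legitimate but is also resolved by that coercivity, without invoking an asymptotic Pohozaev identity.
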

The multiplicity result has also been proved in \cite{Ruiz2005,WeiJC2005}
in case $1<p<11/7$. Note that by setting $u=\frac{1}{\sqrt{\lambda}}v$,
we get equation
\[
\begin{cases}
-\Delta v+v+\phi v=\lambda^{\frac{1-p}{2}}|v|^{p-1}v, & \text{ in }\mathbb{R}^{3},\\
-\Delta\phi=v^{2}, & \phi\to0\text{ as }|x|\to\infty.
\end{cases}
\]
 i.e., $\epsilon=1,C_{S}=-\lambda^{\frac{1-p}{2}}$ in our context.
Thus in general one does not expect uniqueness of positive $radial$
solutions to the repulsive case of SPS system for large value of $C_{S}$.
However, we still don't know whether the uniqueness of minimizers
if it exist hold or not.

In section 3, we shall follow Lieb's idea to show the uniqueness of
minimizers of $I_{M}$ by virtue of the equation
\[
-\Delta u+u=\left(\int_{\mathbb{R}^{3}}\frac{1}{4\pi|x-y|}u(y)^{2}dy\right)u-C_{S}u^{2},\quad u=u(|x|)>0\;\text{in }\mathbb{R}^{3}.
\]
 An interesting question is to extend the uniqueness result to positive
solution of the equation under the mild condition that $u\to0$ as
$x\to\infty$. It is also expected to extend our uniqueness result
to the problem with a general Slater term.

In the last section, we give a study on asymptotic behavior of the
minimizer, which states that
\begin{thm}
\label{thm:asymptotic behavior}Let $Q\in H^{1}(\mathbb{R}^{3})$
be a positive radially symmetric solution to the equation
\[
\begin{cases}
-\Delta Q+Q-\left(I_{2}\ast Q^{2}\right)Q+C_{S}Q^{2}=0, & \text{in }\mathbb{R}^{3}\\
Q(|x|)\to0 & \text{as }|x|\to\infty.
\end{cases}
\]
 Then
\[
\lim_{|x|\to\infty}Q(x)|x|^{1-\alpha/2}e^{|x|}\in(0,\infty),
\]
with
\[
\alpha=\frac{1}{4\pi}\int_{\mathbb{R}^{3}}|Q|^{2}dx.
\]

\end{thm}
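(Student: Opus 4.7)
The plan is to reduce the PDE to a perturbed one-dimensional ODE for the radial profile, identify its leading-order linearization as a Whittaker/Coulomb-type equation, and then match $Q$ against its unique decaying fundamental solution via variation of parameters. Two preliminary facts are needed. First, exponential decay of $Q$: since $V=I_{2}\ast Q^{2}\to 0$ and $Q\to 0$ at infinity, the equation takes the form $-\Delta Q+[1-V+C_{S}Q]\,Q=0$ with the bracket tending to $1$. A standard barrier argument, comparing $Q$ first to $e^{-(1-\varepsilon)|x|}$ and then bootstrapping against the 3D Yukawa potential $|x|^{-1}e^{-|x|}$, yields $Q(r)\le C(1+r)^{-1}e^{-r}$. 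Second, a sharp asymptotic for $V$: Newton's shell theorem applied to the radial $Q^{2}$ gives
\[
V(r)=\frac{1}{r}\int_{0}^{r}s^{2}Q(s)^{2}\,ds+\int_{r}^{\infty}s\,Q(s)^{2}\,ds=\frac{\alpha}{r}+O\!\left(e^{-(2-\varepsilon)r}\right),
\]
where $\alpha=(4\pi)^{-1}\int_{\mathbb{R}^{3}}Q^{2}$ is exactly the constant in the statement.

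Next, I would set $u(r):=rQ(r)$, so that $\Delta Q=u''/r$ and the radial equation becomes
\[
u''(r)=\Big(1-\frac{\alpha}{r}\Big)u(r)+R[u](r),\qquad R[u](r):=-\Big(V(r)-\frac{\alpha}{r}\Big)u(r)+\frac{C_{S}}{r}u(r)^{2},
\]
with $R[u]$ exponentially small by the preliminary bounds. The unperturbed equation $\phi''=(1-\alpha/r)\phi$ is a rescaled Whittaker equation whose fundamental solutions satisfy $\phi_{-}(r)\sim r^{\alpha/2}e^{-r}$ and $\phi_{+}(r)\sim r^{-\alpha/2}e^{r}$ as $r\to\infty$, with constant Wronskian $W$. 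Since $Q\in H^{1}(\mathbb{R}^{3})$ forces $u\in L^{2}(r^{2}\,dr)$, the growing mode $\phi_{+}$ must be absent asymptotically, and I would look for a solution on $[R_{0},\infty)$ in the form
\[
u(r)=c\,\phi_{-}(r)+\frac{1}{W}\phi_{-}(r)\int_{r}^{\infty}\phi_{+}(s)R[u](s)\,ds-\frac{1}{W}\phi_{+}(r)\int_{r}^{\infty}\phi_{-}(s)R[u](s)\,ds.
\]
A contraction argument in the weighted norm $\|v\|:=\sup_{r\ge R_{0}}|v(r)|/\phi_{-}(r)$, using the exponential smallness of $R[u]$ together with $\phi_{-}/\phi_{+}=O(e^{-2r})$, then yields $u(r)=\phi_{-}(r)(c+o(1))$, i.e.,
\[
Q(r)=c\,r^{\alpha/2-1}e^{-r}(1+o(1)),\qquad r\to\infty.
\]
Positivity of $Q$ forces $c\ge 0$, and $c=0$ would give the trivial fixed point $u\equiv 0$ on $[R_{0},\infty)$, contradicting $Q>0$ by ODE unique continuation; hence $c>0$ and the limit in the theorem equals $c$.

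The main obstacle is the contraction step: the weighted space and the threshold $R_{0}$ must be chosen so that the quadratic nonlinearity $u^{2}/r$ and the next-order corrections to $\phi_{\pm}$ (beyond their leading asymptotics) are both absorbed simultaneously. This in turn relies crucially on the sharp exponential rate $e^{-r}$ (rather than merely $e^{-(1-\varepsilon)r}$) from the preliminary step, so the decay bootstrap against the precise Yukawa Green function of $-\Delta+1$ is the delicate piece of the analysis; once it is in hand, the remaining ODE asymptotics are standard perturbation theory in the spirit of Hartman's treatise.
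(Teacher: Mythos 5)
Your overall strategy --- reduce to the radial ODE for $u=rQ$, identify the linearization $u''=(1-\alpha/r)u$ as a Whittaker/Coulomb-type equation with decaying mode $\phi_{-}(r)\sim r^{\alpha/2}e^{-r}$, and close a contraction for the Duhamel representation --- is sound and genuinely different from the paper's route. The paper instead invokes Lemma 6.4 of Moroz--Van Schaftingen to produce a comparison solution $v$ of $-\Delta v+Wv=0$ in an exterior domain with the asymptotics $v(x)|x|\exp\bigl(\int_{\rho_{0}}^{|x|}\sqrt{W}\bigr)\to1$, and then identifies $Q=cv$ by a Wronskian (integration-by-parts) argument before expanding $\sqrt{W}=\sqrt{1-\alpha/r+O(r^{-2})}$. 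Your version is self-contained where the paper's is not, at the price of verifying the contraction estimates by hand; the two are otherwise parallel (both rest on the exponential decay of $Q$ and on $V=\alpha/r+{}$ lower order).

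There is, however, one genuine error, and it sits exactly at the point you single out as the crucial delicate step. The claimed preliminary bound $Q(r)\le C(1+r)^{-1}e^{-r}$ is false: since $\alpha=(4\pi)^{-1}\int_{\mathbb{R}^{3}}Q^{2}>0$, the conclusion of the theorem itself gives $Q(r)\sim c\,r^{\alpha/2-1}e^{-r}$ with $c>0$, which strictly dominates $r^{-1}e^{-r}$; your bound would force $Q(x)|x|^{1-\alpha/2}e^{|x|}\le C|x|^{-\alpha/2}\to0$, contradicting the statement being proved. The comparison with the Yukawa kernel in fact goes the wrong way: for large $r$ one has $VQ-C_{S}Q^{2}=Q(V-C_{S}Q)\ge0$ (since $V\sim\alpha/r$ decays only polynomially while $Q$ decays exponentially), so $Q$ is a supersolution of $-\Delta G+G=0$ in an exterior domain and one obtains a lower bound $Q\ge c\,r^{-1}e^{-r}$, not an upper one; the attractive Coulomb tail $\alpha/r$ in the effective potential slows the decay below the free rate. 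Fortunately your argument does not need the sharp rate: comparing with $r^{-1}e^{-(1-\varepsilon)r}$ (which solves $-\Delta G+(1-\varepsilon)^{2}G=0$, and the effective potential tends to $1>(1-\varepsilon)^{2}$) gives $Q(r)=O(e^{-(1-\varepsilon)r})$ for every $\varepsilon>0$, and since every term of $R[u]$ is quadratic in $Q$ --- both $(V-\alpha/r)u$ with $V-\alpha/r=O(e^{-(2-2\varepsilon)r})$ and $C_{S}u^{2}/r$ --- the integrals $\int_{r}^{\infty}\phi_{+}R$ and $\int_{r}^{\infty}\phi_{-}R$ remain exponentially convergent for $\varepsilon<1/2$ and the contraction closes. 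You should therefore replace the ``bootstrap to the precise Yukawa rate'' by this elementary $O(e^{-(1-\varepsilon)r})$ bound; do note that the paper's cruder $O(e^{-r/2})$ alone would be borderline insufficient for your scheme (it makes $\phi_{+}\cdot u^{2}/r$ merely bounded rather than integrable), so a rate strictly better than $e^{-r/2}$ is indeed required --- just not the false sharp one.
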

A more general result on the asymptotic behavior of solutions to SPS
system will also be given there in the last section.

All the notations in the paper are standard, constants will be denoted
by $C,C_{M},...$ which may be different from line to line.

\section{Existence }

Without misunderstanding, in this section all the integrals will be
taking in $\mathbb{R}^{3}$ unless specified. Our approach is variational.
Since the value of $C_{S}$ does not enter the play, we shall always
assume that $C_{S}=1$ in the following.

Consider the following minimizing problem: let $M>0$ and $\mathcal{A}_{M}=\{u\in H^{1}\left(\mathbb{R}^{3}\right);|u|_{L^{2}}=M\}$,
define

\begin{equation}
I_{M}=\inf\left\{ E(u);u\in\mathcal{A}_{M}\right\} ,\label{eq:minimizing problem}
\end{equation}
where
\begin{equation}
E(u)=\frac{1}{2}\int|\nabla u|^{2}-\frac{1}{4}\int\left(I_{2}\ast u^{2}\right)u^{2}+\frac{1}{3}\int|u|^{3}.\label{eq:Energy functional}
\end{equation}

The first observation of $I_{M}$ is that
\begin{prop*}
$0>I_{M}>-\infty$, and minimizing sequences are bounded.
\end{prop*}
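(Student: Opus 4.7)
The plan is to prove the three assertions in order: the upper bound $I_M<0$ by a scaling ansatz, the lower bound $I_M>-\infty$ via Hardy--Littlewood--Sobolev combined with Gagliardo--Nirenberg/Sobolev, and finally to observe that the same lower bound makes minimizing sequences coercive in $H^1$.

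For $I_M<0$, I would fix any $u_0\in\mathcal{A}_M$ (for instance a Gaussian with mass $M$) and test $E$ against the $L^2$-preserving dilation $u_t(x)=t^{3/2}u_0(tx)$. A direct change of variables gives
\[
E(u_t)=\frac{t^2}{2}\int|\nabla u_0|^2 -\frac{t}{4}\int\bigl(I_2\ast u_0^2\bigr)u_0^2 +\frac{t^{3/2}}{3}\int|u_0|^3.
\]
As $t\downarrow 0$, the term of order $t$ dominates both $t^{3/2}$ and $t^2$, and its coefficient is strictly negative, so $E(u_t)<0$ for sufficiently small $t>0$. Since $u_t\in\mathcal{A}_M$ this yields $I_M<0$.

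For $I_M>-\infty$ and coercivity, the key is to control the nonlocal term by the kinetic one. By Hardy--Littlewood--Sobolev in $\mathbb{R}^3$,
\[
\int\bigl(I_2\ast u^2\bigr)u^2 \le C\,\|u^2\|_{L^{6/5}}^{2}=C\,\|u\|_{L^{12/5}}^{4}.
\]
Interpolating $L^{12/5}$ between $L^2$ and $L^6$ (with exponent $\theta=1/4$) and using the Sobolev embedding $H^1(\mathbb{R}^3)\hookrightarrow L^6(\mathbb{R}^3)$, I get
\[
\|u\|_{L^{12/5}}^{4}\le C\,\|u\|_{L^2}^{3}\|\nabla u\|_{L^2}=CM^{3}\|\nabla u\|_{L^2}
\]
for every $u\in\mathcal{A}_M$. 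Dropping the nonnegative Slater term,
\[
E(u)\ge \tfrac{1}{2}\|\nabla u\|_{L^2}^{2}-\tfrac{C}{4}M^{3}\|\nabla u\|_{L^2}.
\]
Completing the square shows $E(u)\ge -C'M^{6}$, proving $I_M>-\infty$. The same quadratic lower bound shows that any sequence with $E(u_n)\to I_M$ has $\{\|\nabla u_n\|_{L^2}\}$ bounded, hence (together with the fixed mass constraint) $\{u_n\}$ is bounded in $H^1(\mathbb{R}^3)$.

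The only place where any care is needed is the interpolation-plus-Sobolev step that reduces the quartic nonlocal term to a \emph{linear} function of $\|\nabla u\|_{L^2}$ once the $L^2$-norm is fixed; this is what defeats the negative sign in front of the nonlocal term and gives both a finite infimum and automatic coercivity. The Slater term plays no role here and is simply discarded as a nonnegative contribution; it will become important later when one extracts a strongly convergent subsequence in the concentration-compactness step.
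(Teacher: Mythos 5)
Your argument is correct and follows essentially the same route as the paper: the upper bound $I_M<0$ via the mass-preserving dilation $t^{3/2}u(tx)$ with the order-$t$ nonlocal term dominating as $t\downarrow 0$, and the lower bound via Hardy--Littlewood--Sobolev, interpolation of $L^{12/5}$ between $L^2$ and $L^6$, and the Sobolev embedding. The only cosmetic difference is that you complete the square in $\|\nabla u\|_{L^2}$ where the paper applies Young's inequality to absorb half the gradient term; both yield $E(u)\ge -CM^6$ and the same coercivity of minimizing sequences.
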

In fact, for any $u\in H^{1}$, the Hardy-Littlewood-Sobolev inequality
implies that there exists a positive constant $C$ such that
\[
\int_{\mathbb{R}^{3}}\left(I_{2}\ast u^{2}\right)u^{2}\le C|u|_{\frac{12}{5}}^{4},
\]
 thus combining together with Sobolev inequality we have
\begin{align*}
\int_{\mathbb{R}^{3}}\left(I_{2}\ast u^{2}\right)u^{2} & \le C|u|_{\frac{12}{5}}^{4}\le C\left(|u|_{2}^{3/4}|u|_{6}^{1/4}\right)^{4}\\
 & \le C|u|_{2}^{3}|\nabla u|_{2}\\
 & \le C|u|_{2}^{6}+\int|\nabla u|^{2},
\end{align*}
 therefore, for all $u\in\mathcal{A}_{M}$ there holds
\[
E(u)\ge\frac{1}{4}\int|\nabla u|^{2}-CM^{6}\ge-CM^{6}>-\infty,
\]
 furthermore, the above inequality implies that any minimizing sequence
of $I_{M}$ is a bounded sequence in $H^{1}(\mathbb{R}^{3})$. Easy
to note that the Slater term could be replaced by $C_{S}|Q|^{2\alpha+2}$
for any $\alpha\in(0,2)$.

$I_{M}<0$ is consequence of scaling property of the energy functional
$E$. For any $t>0$, let $u^{t}(x)=t^{3/2}u(tx)$ so that $|u^{t}|_{2}=|u|_{2}$,
then
\[
E(u^{t})=\frac{t^{2}}{2}\int|\nabla u|^{2}-\frac{t}{4}\int\left(I_{2}\ast u^{2}\right)u^{2}+\frac{t^{3/2}}{3}\int|u|^{3}.
\]
 Easily to see that
\[
\inf_{t>0}E(u^{t})<0,
\]
 for any nonzero function $u$, from which follows that
\[
I_{M}=\inf\left\{ \inf_{t>0}E(u^{t});u\in\mathcal{A}_{M}\right\} <0.
\]

We remark that only if $2>\alpha>1/3$, we can show the same thing
with the Slater term $C_{S}|Q|^{2\alpha+2}$.

The following proposition is easy but crucial, which will provide
sufficient and necessary condition for the compactness of minimizing
sequences of $I_{M}$. The index $\alpha=1/2$ plays a role here.
For other $\alpha$'s, there are some argument difficulty.
\begin{prop}
\label{prop: IM decreasing}$I_{M}<I_{\alpha}$ for any $0<\alpha<M$.
Furthermore, $I_{M}=M^{6}I_{1}$.\end{prop}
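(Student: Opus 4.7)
The plan is to first establish the scaling identity $I_M = M^6 I_1$ by exhibiting an explicit rescaling that intertwines $\mathcal{A}_1$ with $\mathcal{A}_M$ and multiplies $E$ by the factor $M^6$; the strict inequality $I_M < I_\alpha$ will then follow immediately from the preceding proposition, which gives $I_1 < 0$.

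For the scaling, I would look for a two-parameter ansatz $u(x) = M^p v(M^q x)$ and determine the exponents by matching scalings. The $L^2$ constraint $|u|_2 = M$ when $|v|_2 = 1$ forces $2p-3q = 2$, while requiring the kinetic term $\int |\nabla u|^2$, the nonlocal term $\int (I_2 \ast u^2)u^2$ (using that $I_2$ is homogeneous of degree $-1$), and the Slater term $\int |u|^3$ to all rescale by the same power of $M$ gives the system
\[
2p-q \;=\; 4p-5q \;=\; 3p-3q.
\]
These equations yield $p = 2q$; combined with the mass constraint this gives $q = 2$, $p = 4$, and the common scaling factor is $M^{2p-q} = M^6$. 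A direct change of variables then verifies $E(u) = M^6 E(v)$ term by term for this particular rescaling, and the map $v \mapsto u$ is a bijection between $\mathcal{A}_1$ and $\mathcal{A}_M$. Passing to the infima, we obtain $I_M = M^6 I_1$.

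For the strict inequality, the preceding proposition already shows that $I_\mu < 0$ for every $\mu > 0$; in particular $I_1 < 0$. For $0 < \alpha < M$ we have $M^6 > \alpha^6 > 0$, so multiplying the negative quantity $I_1$ by the larger positive factor produces a strictly smaller real number:
\[
I_M \;=\; M^6 I_1 \;<\; \alpha^6 I_1 \;=\; I_\alpha.
\]

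The only delicate point is recognizing why the specific form $\tfrac{1}{3}\int |u|^3$ of the Slater term is essential: only when the Slater term is cubic in $u$ do all three pieces of $E$ share a common scaling exponent under a one-parameter rescaling that simultaneously preserves the $L^2$ constraint. For a general Slater exponent this alignment fails and the linear system in $(p,q)$ becomes overdetermined, which is precisely why the author signals in the statement that ``the index $\alpha = 1/2$ plays a role here'' and why one should not expect such a clean identity $I_M = c(M)\, I_1$ in the general case.
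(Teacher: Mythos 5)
Your proposal is correct and follows essentially the same route as the paper: the exponent-matching computation recovers exactly the paper's rescaling $u_M(x)=M^4 u(M^2 x)$ with $E(u_M)=M^6E(u)$, and the strict monotonicity then follows from $I_1<0$ just as in the paper. The only difference is presentational --- you derive the exponents systematically rather than writing the transform down directly --- and your closing remark correctly identifies why the cubic Slater term (i.e.\ $\alpha=1/2$) is what makes all three terms scale with a common factor.
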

\begin{proof}
The transform $u\mapsto u_{M}=M^{4}u(M^{2}x)$ is a 1-1 mapping from
$\mathcal{A}_{1}$ to $\mathcal{A}_{M}$, direct computation yields
that
\[
E(u_{M})=M^{6}E(u),
\]
hence
\[
I_{M}=\inf\left\{ E(u);u\in\mathcal{A}_{M}\right\} =\inf\left\{ M^{6}E(u);u\in\mathcal{A}_{1}\right\} =M^{6}I_{1}.
\]
 Since $-\infty<I_{1}<0$, $I_{M}$ is strictly decreasing with respect
to $M$. This gives the proof.
\end{proof}
We are now on the position to show the existence result.
\begin{thm}
(Existence) For any $M>0$ there exists a minimizer of $I_{M}$. Moreover,
for any minimizing sequence $\{u_{k}\}$, the rearrangement sequence
$\{u_{k}^{*}\}$ is also a minimizing sequence which contains a subsequence
converging in $ $$H^{1}\left(\mathbb{R}^{3}\right)$, and the minimizer
is nonnegative and nonincreasing radially symmetric. \end{thm}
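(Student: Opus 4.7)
The plan is to combine symmetric decreasing rearrangement with the compactness of the radial Sobolev embedding and the strict subadditivity $I_M<I_{\alpha}$ for $0<\alpha<M$ (Proposition \ref{prop: IM decreasing}) so as to bypass the full concentration--compactness machinery.

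Given any minimizing sequence $\{u_k\}\subset\mathcal{A}_M$, I would first pass to the symmetric decreasing rearrangements $u_k^{*}:=|u_k|^{*}$. Equimeasurability preserves $|u_k^{*}|_2=M$ and $\int|u_k^{*}|^3=\int|u_k|^3$; the P\'olya--Szeg\H{o} inequality yields $\int|\nabla u_k^{*}|^2\le\int|\nabla u_k|^2$; and since the kernel $I_2(x)=(4\pi|x|)^{-1}$ is itself symmetric decreasing, the Riesz rearrangement inequality gives
\[
\iint_{\mathbb{R}^3\times\mathbb{R}^3}\frac{u_k(x)^2 u_k(y)^2}{4\pi|x-y|}\,dx\,dy\;\le\;\iint_{\mathbb{R}^3\times\mathbb{R}^3}\frac{u_k^{*}(x)^2 u_k^{*}(y)^2}{4\pi|x-y|}\,dx\,dy.
\]
Combining these three effects gives $E(u_k^{*})\le E(u_k)\to I_M$, so $\{u_k^{*}\}$ is itself a minimizing sequence consisting of nonnegative, radial, nonincreasing functions. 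By the \emph{a priori} bound in the earlier Proposition it is bounded in $H^1(\mathbb{R}^3)$; along a subsequence, $u_k^{*}\rightharpoonup Q$ weakly in $H^1$, and $Q$ automatically inherits nonnegativity, radial symmetry and monotonicity. The Strauss compactness embedding $H^1_{\mathrm{rad}}(\mathbb{R}^3)\hookrightarrow L^p(\mathbb{R}^3)$ for $2<p<6$ upgrades this to strong convergence in $L^{12/5}\cap L^3$, which, via the Hardy--Littlewood--Sobolev inequality for the nonlocal piece, lets us pass to the limit in both the nonlocal and the cubic terms of $E$. Weak lower semicontinuity of the Dirichlet energy then forces $E(Q)\le\liminf_k E(u_k^{*})=I_M<0$.

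The decisive step, and what I expect to be the main obstacle, is to verify that no $L^2$-mass has escaped to infinity, i.e.\ that $m:=|Q|_2=M$. Since $E(Q)<0$ we have $Q\not\equiv 0$, so $0<m\le M$ and $Q\in\mathcal{A}_m$, whence $E(Q)\ge I_m$. If $m<M$, Proposition \ref{prop: IM decreasing} supplies the strict inequality $I_m>I_M$, producing the contradiction $I_m\le E(Q)\le I_M<I_m$. Hence $m=M$, so norm convergence together with weak convergence in the Hilbert space $L^2$ gives $u_k^{*}\to Q$ strongly in $L^2$, and $E(Q)=I_M$ identifies $Q$ as a minimizer. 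Strong convergence in $H^1$ then follows cheaply: the nonlocal and cubic terms already converge to their values at $Q$, so the energy equality forces $|\nabla u_k^{*}|_2\to|\nabla Q|_2$, which together with the weak $H^1$-convergence upgrades to strong $H^1$-convergence. Translation invariance of the functional is precisely the reason the mass-identification step cannot be finessed directly; it is the strict monotonicity $I_m>I_M$ in Proposition \ref{prop: IM decreasing} that rules out a nontrivial dichotomy or vanishing of $L^2$-mass at infinity.
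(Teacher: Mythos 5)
Your proposal is correct and follows essentially the same route as the paper: symmetric decreasing rearrangement (P\'olya--Szeg\H{o} plus Riesz) to reduce to a radial nonincreasing minimizing sequence, Strauss compactness to pass to the limit in the nonlocal and cubic terms, and the strict monotonicity $I_m>I_M$ from Proposition \ref{prop: IM decreasing} to rule out loss of $L^2$-mass. The only cosmetic difference is that the paper records the explicit splitting $E(u_k)=\tfrac12\int|\nabla v_k|^2+E(u)+o(1)$ with $v_k=u_k-u$ and reads off both conclusions from it, whereas you use weak lower semicontinuity first and then recover $|\nabla u_k^{*}|_2\to|\nabla Q|_2$ from the energy identity; these are equivalent.
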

\begin{proof}
The existence proof is standard, and we just give it for the reader's
convenience.

Suppose that $\{u_{k}\}\subset\mathcal{A}_{M}$ is a minimizing sequence
so that $E(u_{k})\to I_{M}$. As observed above, this sequence is
bounded in $H^{1}$. Let $u_{k}^{*}$ be the nonincreasing symmetric
rearrangement of $u_{k}$, then $u_{k}^{*}\in H^{1}(\mathbb{R}^{3})$
is radially symmetric, nonnegative, nonincreasing function satisfying
\begin{align*}
 & \int|\nabla u_{k}^{*}|^{2}\le\int|\nabla u_{k}|^{2},\\
 & \int\left(I_{2}\ast u_{k}^{*2}\right)u_{k}^{*2}\ge\int\left(I_{2}\ast u_{k}^{2}\right)u_{k}^{2},\\
 & \int|u_{k}^{*}|^{p}=\int|u_{k}|^{p}\quad\text{ for }p=2,3,
\end{align*}
(see \cite{LiebAnalysis} for a proof), thus it follows that $u_{k}^{*}\in\mathcal{A}_{M}$
and
\[
E(u_{k}^{*})\le E(u_{k}),
\]
hence $\{u_{k}^{*}\}$ is also a minimizing sequence and thus bounded.

Therefore, we can always assume that the minimizing sequence $\{u_{k}\}$
is nonincreasing nonnegative and radially symmetric. Thus there exists
a function $u\in H^{1}(\mathbb{R}^{3})$ so that up to a subsequence
there has
\begin{align*}
u_{k}\rightharpoonup u & \text{ in }H^{1}(\mathbb{R}^{3}),\\
u_{k}\to u & \text{ in }L_{loc}^{2}(\mathbb{R}^{3})\\
u_{k}\to u & \text{ a.e.}x\in\mathbb{R}^{3}.
\end{align*}
It is easy to see that $u$ is a nonincreasing nonnegative and radially
symmetric function. Moreover, by the compact embedding theorem of
\cite{Strauss1977Compactness} there holds that
\[
u_{k}\to u\text{ in }L^{p}(\mathbb{R}^{3}),\text{ for all }2<p<6.
\]

Let $v_{k}=u_{k}-u$, it follows from the above strong convergence
in $L^{p}$ space that
\begin{align*}
 & \int|\nabla u_{k}|^{2}=\int|\nabla v_{k}|^{2}+\int|\nabla u|^{2}+o(1)\\
 & \int_{\mathbb{R}^{3}}\left(I_{2}\ast u_{k}^{2}\right)u_{k}^{2}=\int_{\mathbb{R}^{3}}\left(I_{2}\ast u^{2}\right)u^{2}+o(1),\\
 & \int|u_{k}|^{p}=\int|u|^{p}+o(1),
\end{align*}
 where $o(1)\to0$ as $k\to\infty$. Therefore
\begin{equation}
E(u_{k})=\frac{1}{2}\int|\nabla v_{k}|^{2}+E(u)+o(1).\label{eq: dichotomy inequality}
\end{equation}
By passing to limit $k\to\infty$, we obtain that
\[
E(u)\le I_{M}<0,
\]
which implies $u\ne0$. Thus we only need to show that $u\in\mathcal{A}_{M}$,
i.e. $|u|_{2}=M$.

However, by the Fatou's lemma $|u|_{2}\le\liminf_{k}|u_{k}|_{2}=M$.
Suppose that $u\notin\mathcal{A}_{M}$, then $\exists\:\alpha\in(0,M)$
s.t. $|u|_{2}=\alpha$. Thus $E(u)\ge I_{\alpha}>I_{M}$ according
to Proposition \ref{prop: IM decreasing}, which contradicts to $E(u)\le I_{M}$.
Thus $|u|_{2}=M$. This shows that $u$ is a minimizer of $I_{M}$.

Furthermore, since $u_{k}$ converges weakly to $u$ in $L^{2}$ and
also the $L^{2}$ norm is preserved, we see that $u_{k}\to u$ in
$L^{2}$. Also, by the equation (\ref{eq: dichotomy inequality})
we get that $\int|\nabla v_{k}|^{2}\to0$ as $k\to\infty$, i.e.,
$\nabla u_{k}\to\nabla u$ in $L^{2}$. Hence $u_{k}\to u$ in $H^{1}(\mathbb{R}^{3})$.

This completes the proof.
\end{proof}

\section{Smoothness and Uniqueness}

In this section, we show that any minimizer of $I_{M}$ is $C^{\infty}$
smooth and modulo translations, it is unique. Remember that up to
scaling, minimizers of $I_{M}$ satisfies the following Euler-Lagrange
equation

\begin{equation}
\begin{cases}
-\Delta Q+Q-\left(I_{2}\ast Q^{2}\right)Q+C_{S}Q^{2}=0, & \text{in }\mathbb{R}^{3}\\
Q=Q(|x|)\ge0,\\
Q\in H^{1}\left(\mathbb{R}^{n}\right)
\end{cases}\label{eq:SPS-Equation}
\end{equation}
Essentially, all results in this section are consequences of equation
theory. Thus they are applicable to solutions of SPS equation, not
only minimizers of $I_{M}$. First we derive the following regularity
result, which is only a consequence of the regularity theory of elliptic
equations, see,.e.g. \cite{GilbargTrudinger}.
\begin{thm}
$u\in C^{\infty}(\mathbb{R}^{3})$ with bounded derivatives of all
orders. \end{thm}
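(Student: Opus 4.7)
The plan is a standard elliptic bootstrap on the Euler--Lagrange equation
\[
-\Delta Q + Q = VQ - C_{S}Q^{2},\qquad V := I_{2}\ast Q^{2},
\]
treating the right-hand side as a forcing term and exploiting the fact that $-\Delta+1$ has a bounded inverse $L^{p}(\mathbb{R}^{3})\to W^{2,p}(\mathbb{R}^{3})$ for every $p\in(1,\infty)$. I would proceed in three stages: (i) establish $V\in L^{\infty}$ and $Q\in L^{\infty}$; (ii) climb the Sobolev/Schauder ladder to obtain $Q,V\in C^{\infty}(\mathbb{R}^{3})$; (iii) upgrade the local derivative bounds to global ones.

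For Stage (i), the Sobolev embedding gives $Q\in L^{p}(\mathbb{R}^{3})$ for $p\in[2,6]$, so $Q^{2}\in L^{1}\cap L^{3}$. Splitting the Newtonian kernel into its restrictions to $\{|x-y|\le 1\}$ and $\{|x-y|\ge 1\}$ and applying H\"older's inequality on each piece (the inner piece controlled by $\|Q\|_{6}^{2}$, the outer by $\|Q\|_{2}^{2}$) yields $V\in L^{\infty}(\mathbb{R}^{3})$ with $V(x)\to 0$ as $|x|\to\infty$. Hence the forcing $VQ-C_{S}Q^{2}$ lies in $L^{2}$ (using $Q\in L^{2}\cap L^{4}$), so $Q\in W^{2,2}=H^{2}$. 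Since $W^{2,2}(\mathbb{R}^{3})\hookrightarrow L^{\infty}$, one obtains $Q\in L^{\infty}$ directly, and then a few further iterations give $Q\in W^{2,p}$ for every $p<\infty$.

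For Stage (ii), with $Q,V\in L^{\infty}$ the forcing is bounded, so local $W^{2,p}$ estimates for all $p<\infty$ yield $Q\in C^{1,\alpha}_{\mathrm{loc}}$ for each $\alpha\in(0,1)$. Simultaneously $-\Delta V = Q^{2}\in C^{0,\alpha}_{\mathrm{loc}}$, and Calder\'on--Zygmund/Schauder theory delivers $V\in C^{2,\alpha}_{\mathrm{loc}}$. Feeding these regularities back into the $Q$-equation gives $Q\in C^{2,\alpha}_{\mathrm{loc}}$, and the two equations can be alternated indefinitely to reach $Q,V\in C^{\infty}(\mathbb{R}^{3})$. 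For Stage (iii), since $Q$ is radial and monotone decreasing (Section~2) it vanishes at infinity, and the norms $\|Q\|_{L^{p}(B_{1}(x))}$ and $\|V\|_{L^{\infty}(B_{1}(x))}$ are bounded uniformly in $x\in\mathbb{R}^{3}$. Interior Schauder estimates on the unit ball $B_{1}(x)$, whose constants are independent of $x$, then produce the required uniform bounds on derivatives of all orders.

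The main obstacle I anticipate is handling the nonlocal term $V$ in Stage (i): the textbook bootstrap for scalar-field equations treats a purely local nonlinearity, so one must verify that convolution with $I_{2}$ does not destroy the gain in integrability supplied by elliptic regularity, and in particular that $V$ is bounded before any information beyond $Q\in H^{1}$ is available. The kernel-splitting argument sketched above is the key workaround; once $V\in L^{\infty}$ is in hand, the remainder of the proof is a routine application of standard elliptic regularity as developed, e.g., in \cite{GilbargTrudinger}.
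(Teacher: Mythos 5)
Your proposal is correct and follows essentially the same elliptic bootstrap as the paper: first gain integrability on $V=I_{2}\ast Q^{2}$ and hence on the forcing term, conclude $Q\in W^{2,p}$ and then H\"older continuity, and finally alternate Calder\'on--Zygmund/Schauder estimates between the equations for $Q$ and $V$ (with constants uniform over unit balls) to reach $C^{\infty}$ with bounded derivatives. The only cosmetic difference is the very first step: you split the Newtonian kernel to get $V\in L^{\infty}$ directly, whereas the paper invokes the Riesz-potential mapping $L^{p}\to L^{3p/(3-2p)}$ to place $V$ in $L^{q}$ for $3<q<\infty$ and passes through $Q\in W^{2,3}\hookrightarrow C^{\alpha}$; both routes work.
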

\begin{proof}
Written by $V=I_{2}\ast Q^{2}$, it follows from the Riesz potential
theory (see \cite{Stein1970SingularIntegral}) that $I_{2}$ is a
bounded linear operator from $L^{p}(\mathbb{R}^{3})$ to $L^{q}(\mathbb{R}^{3})$
for any $1<p<\frac{3}{2}$ and $q=\frac{3p}{3-2p}$, thus $V\in L^{q}(\mathbb{R}^{3})$
for any $3<q<\infty$ since $Q^{2}\in L^{1}\cap L^{3/2}(\mathbb{R}^{3})$,
thus $VQ\in L^{r}(\mathbb{R}^{3})$ for all $r\in(6/5,6)$. As $Q^{2}\in L^{1}\cap L^{3}(\mathbb{R}^{3})$
we see that $VQ+C_{S}Q^{2}\in L^{3}(\mathbb{R}^{3})$. Hence by the
Calderon-Zygmund theory, $Q\in W^{2,3}(\mathbb{R}^{3})$, which implies
that $Q\in C^{\alpha}(\mathbb{R}^{3})$ for all $0<\alpha<1$, the
Hölder space of order $\alpha$.

Hence $Q^{2}\in C^{\alpha}(\mathbb{R}^{3})$ for all $0<\alpha<1$.
Since $-\Delta V=Q^{2}$, Schauder's theory implies that $V\in C^{2,\alpha}(\mathbb{R}^{3})$
for all $0<\alpha<1$. (This could be firstly done locally, say, on
balls $B(x,1)$, but since the estimates does not depend on the center
$x$, we get $C^{2,\alpha}$ estimate on the whole space) Again, we
find that $VQ+Q^{2}$ is Hölder continuous, hence $Q\in C^{2,\alpha}(\mathbb{R}^{3})$
for all $0<\alpha<1$. In general, if $Q\in C^{k,\alpha}(\mathbb{R}^{3})$,
then $V\in C^{2+k,\alpha}(\mathbb{R}^{3})$ for all $0<\alpha<1$,
which ensures that $VQ+Q^{2}\in C^{k,\alpha}(\mathbb{R}^{3})$, and
apply Schauder's theory again to see that $Q\in C^{2+k,\alpha}(\mathbb{R}^{3})$.
This finishes the proof. \end{proof}
\begin{thm}
There exists at most one positive solution of the Schrodinger-Poisson-Slater
equation (\ref{eq:SPS-Equation}). \end{thm}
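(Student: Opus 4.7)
The plan is to adapt Lieb's 1977 uniqueness argument for the Choquard equation, exploiting the fact that the Slater term $C_S Q^2$ aids rather than obstructs the spectral comparison. Let $Q_1,Q_2$ be two positive radial $H^1$ solutions of (\ref{eq:SPS-Equation}) and set $V_i := I_2 \ast Q_i^2$. By the regularity theorem both pairs $(Q_i,V_i)$ are smooth with $V_i>0$ strictly radially decreasing, and the equation can be rewritten as $L_i Q_i = 0$ for the Schr\"odinger-type operator $L_i := -\Delta + 1 - V_i + C_S Q_i$. Since $Q_i>0$, the strong maximum principle identifies $Q_i$ as the Perron--Frobenius ground state of $L_i$ at eigenvalue zero, hence the unique positive $H^1$ solution of $L_i u = 0$ up to scalar multiples.

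Introduce $w := Q_1 - Q_2$ and $W := V_1 - V_2 = I_2 \ast ((Q_1+Q_2)w)$. Subtracting the two equations and symmetrizing by swapping $Q_1\leftrightarrow Q_2$ and averaging the two resulting identities produces
\[
\mathcal{L} w \;=\; \tfrac{1}{2}(Q_1+Q_2)\cdot I_2\ast\bigl((Q_1+Q_2)w\bigr), \qquad \mathcal{L} := -\Delta + 1 - \tfrac{1}{2}(V_1+V_2) + C_S(Q_1+Q_2).
\]
Pairing against $w$ gives the quadratic identity
\[
\langle w,\mathcal{L} w\rangle \;=\; \tfrac{1}{2}\, D\bigl((Q_1+Q_2)w,\,(Q_1+Q_2)w\bigr), \qquad D(f,g) := \iint\frac{f(x)g(y)}{4\pi|x-y|}\,dx\,dy.
\]
The proof will be complete once one establishes the strict spectral inequality
\[
\langle \varphi,\mathcal{L}\varphi\rangle \;>\; \tfrac{1}{2}\, D\bigl((Q_1+Q_2)\varphi,\,(Q_1+Q_2)\varphi\bigr) \qquad \text{for every } \varphi \in H^1(\mathbb{R}^3)\setminus\{0\},
\]
since the identity above, applied with $\varphi=w$, then forces $w\equiv 0$ and hence $Q_1=Q_2$.

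The hard part is the strict spectral inequality. In Lieb's pure Choquard setting ($C_S=0$) the analogous inequality rests on the ground-state characterization of $Q_i$ for $-\Delta + 1 - V_i$ at eigenvalue zero together with the strict convexity of $D(\rho,\rho)$ in $\rho$, and the nonlocal constraint $V_i = I_2 \ast Q_i^2$ is used to exclude the dangerous perturbation directions. In the SPS setting, the Slater contribution $C_S(Q_1+Q_2)\geq 0$ adds a strictly positive local perturbation to $\mathcal{L}$, lifting the bottom of its spectrum off zero and providing a quantitative spectral gap. The delicate point is to show that this gap dominates the right-hand Coulomb form: the plan is to combine a Hardy--Littlewood--Sobolev bound controlling $D((Q_1+Q_2)\varphi,(Q_1+Q_2)\varphi)$ against $\|(Q_1+Q_2)\varphi\|_{L^{12/5}}^2$, interpolation against $\|\varphi\|_{H^1}$, and the spectral gap of $\mathcal{L}$ to absorb the result; strictness of the final inequality comes from the strict convexity of $D$ in $\rho$ combined with the Perron--Frobenius characterization of $Q_i$ as the ground state of $L_i$, which rules out the marginal equality cases.
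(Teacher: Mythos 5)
Your reduction to the quadratic identity is correct as far as it goes: with $w=Q_1-Q_2$ the averaged subtraction does give $\mathcal{L}w=\tfrac12 (Q_1+Q_2)\,I_2\ast((Q_1+Q_2)w)$ and hence $\langle w,\mathcal{L}w\rangle=\tfrac12 D((Q_1+Q_2)w,(Q_1+Q_2)w)$. But the entire proof then hinges on the ``strict spectral inequality'' $\langle\varphi,\mathcal{L}\varphi\rangle>\tfrac12 D((Q_1+Q_2)\varphi,(Q_1+Q_2)\varphi)$ for all $\varphi\ne0$, which you do not prove (you only outline a ``plan'' via Hardy--Littlewood--Sobolev and a spectral gap), and which is in fact \emph{false}. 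Test it at the diagonal $Q_1=Q_2=Q$ with $\varphi=Q$: multiplying the equation by $Q$ and integrating gives $\int|\nabla Q|^2+\int Q^2-\int VQ^2+C_S\int Q^3=0$, so the left side equals $\langle Q,(-\Delta+1-V+2C_SQ)Q\rangle=C_S\int Q^3$, while the right side equals $\tfrac12 D(2Q^2,2Q^2)=2\int VQ^2=2\bigl(\int|\nabla Q|^2+\int Q^2+C_S\int Q^3\bigr)$, which is strictly \emph{larger}. So the inequality you need goes the wrong way on a natural test direction, and no amount of HLS/interpolation bookkeeping will rescue it; the positivity of the Slater perturbation $C_S(Q_1+Q_2)$ does not dominate the Coulomb form on the right. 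The argument as proposed therefore cannot conclude $w\equiv0$.

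You have also mislabeled the strategy: Lieb's 1977 uniqueness proof for the Choquard equation is not a spectral-gap argument but an ODE comparison, and that is exactly what the paper does. The paper uses Newton's theorem to write $-(I_2\ast Q^2)(r)=A_Q(r)-\lambda_Q$ with $A_Q(r)=\int_0^rK(r,s)Q^2(s)\,ds$ monotone in $Q$, proves $\lambda_Q-1>0$, rescales $Q(r)=\mu^2P(\mu r)$ with $\mu^2=\lambda_Q-1$ (this is where the specific power $Q^2$ of the Slater term is essential, since it scales compatibly), reduces to a normalized radial initial value problem, and then runs a Sturm-type comparison on $S=Q'R-QR'$: the sign of $(r^{n-1}S)'$ is controlled by the ordering of $Q,R$ and of $A_Q,A_R$, and integrating over $(0,\infty)$ yields a contradiction unless $Q\equiv R$. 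The monotonicity of the Newtonian kernel representation, which is the engine of that argument, plays no role in your proposal. To repair your write-up you would essentially have to abandon the global spectral inequality and switch to this comparison scheme.
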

\begin{proof}
We follow the method of \cite{Lieb1977Uniqueness}. Since $Q\ge0$
and $Q\ne0$, rewriting the equation as
\[
-\Delta Q+c(x)Q=\left(I_{2}\ast Q^{2}\right)Q\ge0\quad\text{ in }\mathbb{R}^{3},
\]
where $c(x)=1+C_{S}Q\ge0$ for all $x\in\mathbb{R}^{3}$, then the
strong maximum principle and Hopf lemma of elliptic equations (see
\cite{GilbargTrudinger} for instance) implies that $Q>0$ and $Q'(|x|)=\frac{\partial Q}{\partial\nu}(x)<0$
for $|x|>0$.

As in \cite{Lieb1977Uniqueness}, applying Newton's theorem which
states that
\[
-\left(I_{2}\ast Q^{2}\right)(r)=\int_{0}^{r}K(r,s)Q^{2}(s)ds-\int_{0}^{\infty}Q^{2}(s)sds=:A_{Q}(r)-\lambda_{Q},
\]
where
\[
K(r,s)=s\left(1-\frac{s^{n-2}}{r^{n-2}}\right)\ge0
\]
 for $r\ge s$, the equation can be read as
\[
-\left(Q''+\frac{n-1}{r}Q'\right)+A_{Q}(r)Q+C_{S}Q^{2}=\left(\lambda_{Q}-1\right)Q.
\]

We claim that $\lambda_{Q}-1>0$. In fact, since $Q\ge0\ge Q'$, by
multiplying $Q$ on both sides of the equation and integrating from
0 to $\infty$, one derives that
\[
\left(\lambda_{Q}-1\right)\int_{0}^{\infty}Q^{2}=\int_{0}^{\infty}Q'^{2}-\frac{n-1}{r}Q'Q+A_{Q}(r)Q^{2}+Q^{3}dr>0.
\]
By a simple scaling argument ($Q(r)=\mu^{2}P(\mu r)$, $\mu^{2}=\lambda_{Q}-1$
), we only need to consider the initial value problem with normalized
coefficient. The special power of Slater term is crucial here.
\[
\begin{cases}
-\left(Q''+\frac{n-1}{r}Q'\right)+A_{Q}(r)Q+C_{S}Q^{2}=Q, & r>0,\\
Q(0)>0,\\
Q'(0)=0.
\end{cases}
\]
We are now able to prove the theorem.

Suppose on the contrary $Q,R$ are two such solutions. Then by
local uniqueness result of ODE, we get that if $Q\ne R$ then
$Q(0)\ne R(0)$. So we may assume that $Q(0)>R(0)$, it follows by
continuity that $Q>R$ in a neighborhood of zero. We use Sturm-type
comparison argument as follows. Multiplying the equation of $Q$ by
$R$ and vice vase, and letting $S=Q'R-QR'$, then we get from the
difference of the two equations that
\[
\left(r^{n-1}S\right)'=r^{n-1}\left(A_{Q}-A_{R}\right)QR+r^{n-1}\left(Q-R\right)QR.
\]
 Since $Q>R$ in a neighborhood of 0, we see that $\left(r^{n-1}S\right)'>0$
in some maximum interval $(0,r^{*})\subset(0,\infty)$ s.t. $\left(r^{n-1}S\right)'>0$
in $(0,r^{*})$ and $\left(r^{n-1}S\right)'(r^{*})=0$. We shall show
that $r^{*}=\infty$. Otherwise, suppose that $r^{*}<\infty$, then
$\left(r^{n-1}S\right)'>0$ in $(0,r^{*})$ and $S(0)=0$ implies
that $S>0$ in $(0,r^{*})$ . Equivalently, i.e.,
\[
\left(\frac{Q}{R}\right)'=\frac{S}{R^{2}}>0\quad in\;(0,r^{*}).
\]
 Hence
\[
Q>\frac{Q(0)}{R(0)}R>R\quad in\;(0,r^{*}].
\]
 But this turns out that
\[
A_{Q}(r^{*})>A_{R}(r^{*}),
\]
 then $\left(r^{n-1}S\right)'(r^{*})>0$, contradiction to the assumption.
Hence $r^{*}=\infty$, and so $Q>R$ and $A_{Q}-A_{R}>0$ in $(0,\infty)$.
However, this is also impossible, since we have
\begin{eqnarray*}
0<\int_{0}^{\infty}r^{n-1}\left(A_{Q}-A_{R}+Q-R\right)QR & = & \int_{0}^{\infty}\left(r^{n-1}Q'\right)'R-\left(r^{n-1}R'\right)'Qdr=0.
\end{eqnarray*}
Thus $Q\equiv R$. This finishes the proof.
\end{proof}

\section{Asymptotic behavior }

In this last section, we study the asymptotic behavior of solutions.
There should be some results already in the literature, but since
we can not give such a reference, and also under the consideration
that it would be useful for a further study in the future, we'd like
to give a precise enough result here. As both repulsive and attractive
case of SPS system with different Slater terms are interesting, we'd
better to allow our result to have some generality.

Theorem \ref{thm:asymptotic behavior} becomes a special case of the
following one
\begin{thm}
\label{thm:Asymptotic behavior}Let $Q$ be a positive radially symmetric
solution to the equation
\[
\begin{cases}
-\Delta Q+Q=\epsilon\left(I_{2}\ast Q^{2}\right)Q+f(Q)Q, & \text{in }\mathbb{R}^{3}\\
Q(|x|)\to0 & \text{as }|x|\to\infty.
\end{cases}
\]
where, $\epsilon=\pm1$, and for some $\beta>0$, $f$ satisfies the
condition that
\[
\lim_{t\to0+}\frac{f(t)}{t^{\beta}}=0.
\]
Then
\[
\lim_{|x|\to\infty}Q(x)|x|^{1-\epsilon\alpha/2}e^{|x|}\in(0,\infty),
\]
with
\[
\alpha=\frac{1}{4\pi}\int_{\mathbb{R}^{3}}|Q|^{2}dx.
\]

\end{thm}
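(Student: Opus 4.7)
The plan is to reduce the equation to a one-dimensional ODE in $r=|x|$ and to extract the sharp asymptotics via a Riccati analysis of the logarithmic derivative. First I would set $u(r)=rQ(r)$. In three-dimensional spherical coordinates the equation becomes
\[
-u''(r)+u(r)=\bigl(\epsilon V(r)+f(Q(r))\bigr)u(r),\qquad r>0,
\]
where Newton's theorem (used already in the uniqueness proof) gives
\[
V(r)=\bigl(I_{2}*Q^{2}\bigr)(r)=\frac{1}{r}\int_{0}^{r}s^{2}Q(s)^{2}\,ds+\int_{r}^{\infty}sQ(s)^{2}\,ds=\frac{\alpha}{r}-\int_{r}^{\infty}\frac{s(s-r)}{r}Q(s)^{2}\,ds,
\]
so that the deviation $V(r)-\alpha/r$ is controlled by the tail of $Q^{2}$.

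Next I would establish exponential decay of $Q$. Since $\epsilon V(r)+f(Q(r))\to0$ at infinity, $Q$ is a subsolution of $-\Delta w+\mu^{2}w\le0$ outside a large ball for every $\mu<1$, and comparison against the radial supersolution $Ce^{-\mu r}/r$ yields $Q(r)\le C_{\mu}e^{-\mu r}$ for each such $\mu$. A short bootstrap then sharpens this to $Q(r)\le Cr^{N}e^{-r}$ for some $N$. Plugging this bound into the Newton representation of $V$ and into the hypothesis $f(t)=o(t^{\beta})$ as $t\to0^{+}$, the coefficient of the ODE splits as
\[
\epsilon V(r)+f(Q(r))=\frac{\epsilon\alpha}{r}+R(r),
\]
with $R(r)$ decaying at least like $e^{-cr}$ for some $c>0$ (depending on $\beta$).

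The core of the proof is then the substitution $v(r)=u'(r)/u(r)$, which turns the ODE into the scalar Riccati equation
\[
v'+v^{2}=1-\frac{\epsilon\alpha}{r}-R(r).
\]
Because $u>0$ and $u(r)\to0$, a standard invariant-interval argument forces $v\to-1$, the stable equilibrium of the limiting autonomous equation $v'+v^{2}=1$. Writing $v=-1+\delta$ gives $\delta'=2\delta-\delta^{2}-\epsilon\alpha/r-R(r)$; multiplying by $e^{-2r}$ and integrating from $r$ to $\infty$ (so that $e^{-2T}\delta(T)\to0$) yields
\[
\delta(r)=e^{2r}\int_{r}^{\infty}e^{-2s}\Bigl(\delta(s)^{2}+\frac{\epsilon\alpha}{s}+R(s)\Bigr)\,ds=\frac{\epsilon\alpha}{2r}+O(r^{-2}),
\]
by integration by parts on the $\epsilon\alpha/s$ term and the bounds $\delta^{2}=O(r^{-2})$, $R=O(e^{-cr})$. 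Integrating $(\log u)'=v$ from a fixed $r_{0}$ then produces $\log u(r)=-r+(\epsilon\alpha/2)\log r+C_{0}+o(1)$, so $u(r)=e^{C_{0}}r^{\epsilon\alpha/2}e^{-r}(1+o(1))$. Since $Q(r)=u(r)/r$, the limit in the theorem equals $e^{C_{0}}$, which is finite and strictly positive by the positivity of $u$.

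The main obstacle I expect is the Riccati step: proving that $v$ is actually attracted to the decaying root $v=-1$ rather than drifting toward $v=+1$, and keeping tight enough control on the nonlinear remainder $\delta^{2}$ and on $R$ so that the leading coefficient $\epsilon\alpha/(2r)$ survives integration against $(\log u)'=v$. Once these estimates are secured, the rest is routine, and the positivity of the limiting constant is automatic from $u>0$.
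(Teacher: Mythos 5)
Your argument is correct in substance, but it takes a genuinely different route from the paper's. Both proofs share the same two preliminary estimates: a comparison argument giving exponential decay of $Q$, and the Newton's-theorem expansion $V(r)=\alpha/r+O(r^{-2})$ (the paper derives the latter from the radial ODE $\left(r^{2}V'\right)'=-r^{2}Q^{2}$ rather than from your explicit kernel formula, but the content is identical). Where you diverge is the final step: the paper invokes an external lemma of Moroz--Van Schaftingen which \emph{constructs} a model radial solution $v$ of $-\Delta v+Wv=0$ with the exact WKB asymptotics $v(x)|x|^{(N-1)/2}\exp\bigl(\int_{\rho_{0}}^{|x|}\sqrt{W}\bigr)\to1$, and then identifies $Q=cv$ by a Wronskian computation; you instead pass to $u=rQ$ and run a self-contained Riccati analysis of $v=u'/u$, extracting the correction $\delta=v+1=\epsilon\alpha/(2r)+O(r^{-2})$ and integrating the logarithmic derivative. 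Your route is more elementary and makes the origin of the anomalous power $|x|^{\epsilon\alpha/2}$ transparent; the paper's route is shorter once the cited lemma is granted and transfers immediately to higher dimensions and more general potentials $W$. Two points in your sketch deserve care when written out. First, for the limiting equation $v'=1-v^{2}$ the root $v=-1$ is the \emph{repelling} equilibrium as $r\to\infty$ (it is $v=+1$ that attracts), which is precisely why you must use both $u>0$ (to rule out Riccati blow-down, i.e.\ a zero of $u$) and $u\to0$ (to rule out convergence to $+1$); your "invariant-interval" phrasing captures the right mechanism but the stability label is reversed. Second, the estimate $\delta^{2}=O(r^{-2})$ is not available a priori; you need a short bootstrap (from $\delta=o(1)$ the integral identity gives $\sup_{s\ge r}|\delta(s)|\le C/r$, and only then the refined expansion) before the $\epsilon\alpha/(2r)$ term can be isolated. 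Both issues are routine and you flag them yourself, so I regard the proposal as a valid alternative proof.
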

A typical example of $f$ is given by $f(t)=\sum_{i}c_{i}|t|^{\beta_{i}-1}t$
for some constant $c_{i}$ and some positive constants $\beta_{i}$,
especially, $f(t)=-C_{S}|t|^{2\alpha}t$ coincides with the Slater
term.

In case $f\equiv0$, this result has been contained in \cite{Schaftingen2013}
Proposition 6.5, where the authors proved much more general results.
Their method can be easily applied to our theorem, but for the reader's
convenience, we shall give a sketch of proof here. From the proof
the readers will see that the last term $f(Q)Q$ is in essence a perturbation
of the nonlocal term and thus nothing changes. There are still rooms
to relax the restriction on $f$. In the same spirit, our theorem
can be extended to general Choquard equation and higher dimensions
as well, but which is not our interest here.

We divide the proof into several propositions. Hereafter we write
$V=I_{2}\ast Q^{2}$. Obviously $V(x)=V(|x|)>0$ and $V(r)\to0$ as
$r\to\infty$. The first proposition is about exponential decay of
$Q$.
\begin{prop}
$Q(r)=O(e^{-r/2})$ as $r\to\infty$. \end{prop}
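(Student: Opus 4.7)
The plan is to recast the nonlinear equation as a linear one, $-\Delta Q + c(r) Q = 0$, where
$$c(r) := 1 - \epsilon V(r) - f(Q(r))$$
has limit $1$ at infinity, and then to compare $Q$ with the explicit radial solution $w(r) := A e^{-r/2}/r$ of $(-\Delta + 1/4) w = 0$ in $\mathbb{R}^{3} \setminus \{0\}$.

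First I would verify that $c(r) \to 1$: the potential $V = I_{2} * Q^{2}$ satisfies $V(r) \to 0$ (already noted in the excerpt), and $Q(r) \to 0$ by hypothesis, so combined with the assumption $\lim_{t \to 0+} f(t)/t^{\beta} = 0$ this yields $f(Q(r)) \to 0$. Thus there exists $R_{0} > 0$ with $c(r) \ge 1/4$ for all $r \ge R_{0}$. A direct computation in three dimensions shows that $w$ satisfies $(-\Delta + 1/4) w = 0$ in $\{|x| > 0\}$, so on the exterior $\{|x| > R_{0}\}$
$$(-\Delta + 1/4)(w - Q) = (c(r) - 1/4) Q \ge 0.$$
Choosing $A$ large enough that $w(R_{0}) \ge Q(R_{0})$ (possible since $Q$ is bounded on the sphere $|x|=R_0$ by the regularity theorem above), we have $w - Q \ge 0$ on $|x| = R_{0}$ and $w - Q \to 0$ at infinity. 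The weak maximum principle for $-\Delta + 1/4$, applied on annuli $\{R_{0} < |x| < R'\}$ and passed to the limit $R' \to \infty$, forbids any negative interior value of $w - Q$; hence $Q(r) \le A e^{-r/2}/r$ for $r \ge R_{0}$, which is in fact stronger than the claimed bound $Q(r) = O(e^{-r/2})$.

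The only point requiring attention is the exterior-domain maximum principle: the strictly positive zeroth-order coefficient $1/4$ is essential, as it forbids $w - Q$ from attaining a negative interior minimum, and the joint decay at infinity of $w$ and $Q$ closes the argument after passing $R' \to \infty$. The nonlocal and Slater-type contributions enter only through their amplitudes, and once absorbed into $c(r)$ they play no further structural role; the same proof sketch would go through for any perturbation of the linear part that tends to zero at infinity, which is why one should expect the sharp rate $|x|^{-1} e^{-|x|}$ (to be refined in the next step of the paper with the exponent $1 - \epsilon \alpha/2$ coming from the long-range Newtonian tail of $V$).
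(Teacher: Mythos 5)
Your proof is correct and follows essentially the same route as the paper: both arguments compare $Q$ with the explicit barrier $G(r)=r^{-1}e^{-r/2}$ on an exterior domain after absorbing the nonlocal and Slater terms into a zeroth-order coefficient that tends to $1$. The only difference is in implementation --- you invoke the weak maximum principle for $-\Delta+\tfrac14$ on annuli $\{R_0<|x|<R'\}$ and let $R'\to\infty$, while the paper runs a Wronskian argument showing $Q/G$ is nonincreasing --- and you correctly use the constant $\tfrac14$ in $\left(-\Delta+\tfrac14\right)G=0$, where the paper has a harmless typo ($G/2$).
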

\begin{proof}
The proof is based on an easy comparison argument. Let $G(x)=|x|^{-1}e^{-|x|/2}$
so that $-\Delta G+G/2=0$ for $x\ne0$. Since $Q(x)=o(1)$ as $|x|\to\infty$,
for $|x|$ large enough, $\epsilon V(x)-f(Q(x))\le1/2$, thus $-\Delta Q+Q/2\le0$
for $|x|$ large. Therefore by multiplying $Q$ to the equation of
$G$, integrating on the exterior ball $B_{r}^{c}(0)$ and vice vesa,
we get that $Q'G-QG'\le0$ as $r\to\infty$, i.e., $Q/G$ is a nonincreasing
function, hence $Q(r)\le(Q(r_{0})/G(r_{0}))G(r)$ for all $r>r_{0}$
with $r_{0}$ large enough. This gives the proof.
\end{proof}
Secondly we estimate the nonlocal term $V$, which, unlike the solution
$Q$, decays polynomially at infinity and gives contribution to the
decay of $Q$.
\begin{prop}
$V(r)=\frac{\alpha}{r}+O(r^{-2})$ and $V'(r)=-\frac{\alpha}{r^{2}}+O(r^{-3})$
as $r\to\infty$, where $\alpha=\frac{1}{4\pi}\int_{\mathbb{R}^{3}}|Q|^{2}dx$.\end{prop}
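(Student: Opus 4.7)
The plan is to reduce everything to a one-dimensional integral via Newton's theorem, which was already invoked in the uniqueness proof above, and then to exploit the exponential decay $Q(r)=O(e^{-r/2})$ from the previous proposition. Since $Q$ is radial, Newton's theorem gives the identity
$$
V(r)=\frac{1}{r}\int_{0}^{r}s^{2}Q^{2}(s)\,ds+\int_{r}^{\infty}s\,Q^{2}(s)\,ds.
$$
The key observation is that $\int_{0}^{\infty}s^{2}Q^{2}(s)\,ds=\frac{1}{4\pi}\int_{\mathbb{R}^{3}}Q^{2}\,dx=\alpha$, so after subtracting off the leading term one gets
$$
V(r)-\frac{\alpha}{r}=-\frac{1}{r}\int_{r}^{\infty}s^{2}Q^{2}(s)\,ds+\int_{r}^{\infty}s\,Q^{2}(s)\,ds.
$$

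The exponential decay of $Q$ immediately controls both tails: a routine integration by parts (or the tail asymptotics of the incomplete gamma function) yields $\int_{r}^{\infty}s^{k}Q^{2}(s)\,ds=O(r^{k}e^{-r})$ for $k=1,2$, so the right-hand side above is $O(re^{-r})$, which is certainly $O(r^{-2})$ (in fact better than any polynomial). Differentiating the Newton representation, the boundary contributions from the two integrals cancel, and I obtain
$$
V'(r)=-\frac{1}{r^{2}}\int_{0}^{r}s^{2}Q^{2}(s)\,ds=-\frac{\alpha}{r^{2}}+\frac{1}{r^{2}}\int_{r}^{\infty}s^{2}Q^{2}(s)\,ds,
$$
and the same tail estimate produces an $O(e^{-r})$ (hence $O(r^{-3})$) remainder.

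There is essentially no obstacle here. The content of the proposition is the algebraic identity supplied by Newton's theorem: once the convolution $I_{2}\ast Q^{2}$ is rewritten as a one-dimensional integral, the total mass $\alpha$ is precisely what survives as $r\to\infty$, and the exponentially decaying corrections comfortably beat the polynomial remainders stated. The only mild care needed is justifying differentiation under the integral sign, which is standard since $Q$ is smooth with rapidly decaying derivatives.
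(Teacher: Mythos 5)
Your proof is correct and follows essentially the same route as the paper: the paper obtains the identical representations $V'(r)=-r^{-2}\int_{0}^{r}s^{2}Q^{2}(s)\,ds$ and $V(r)=\frac{\alpha}{r}-\int_{r}^{\infty}\frac{1}{4\pi s^{2}}\bigl(\int_{|x|\ge s}Q^{2}\bigr)ds$ by integrating the radial equation $(r^{2}V')'=-r^{2}Q^{2}$ twice, whereas you read them off from Newton's theorem, and then both arguments finish by the same tail estimate using $Q(r)=O(e^{-r/2})$. The two derivations of the representation are equivalent (Fubini converts one remainder term into the other), so there is nothing substantive to distinguish them.
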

\begin{proof}
This is rooted in the fact that $-\Delta V=Q^{2}$. Since $Q$ is
radially symmetric, so is $V$, we have that
\[
\left(r^{2}V'(r)\right)'=-r^{2}Q^{2}(r)\le0,
\]
 which implies that $r^{2}V'(r)$ is nonincreasing and hence $V'\le0$
since $V'(0)=0$ (by smoothness of $V$). Therefore
\[
V'(r)=-r^{-2}\int_{0}^{r}s^{2}Q^{2}(s)ds=-\frac{1}{4\pi r^{2}}\int_{B_{r}(0)}Q^{2}dx,
\]
 which implies that
\[
V(r)=\int_{r}^{\infty}\frac{1}{4\pi s^{2}}\left(\int_{B_{s}(0)}Q^{2}dx\right)ds=\frac{1}{4\pi r}\int_{\mathbb{R}^{3}}Q^{2}dx-\int_{r}^{\infty}\frac{1}{4\pi s^{2}}\left(\int_{|x|\ge s}Q^{2}dx\right)ds.
\]
 According to the exponential decay of $Q$ at infinity we see that
$\int_{r}^{\infty}\frac{1}{4\pi s^{2}}\left(\int_{|x|\ge s}Q^{2}dx\right)ds=O(r^{-2})$.
This finishes the proof.
\end{proof}
Finally to prove the asymptotic behavior of $Q$ at infinity we apply
the following lemma which given by \cite{Schaftingen2013} Lemma 6.4
\begin{lem*}
Let $\rho\ge0$ and $W\in C^{1}((\rho,\infty),\mathbb{R})$. If
\[
\lim_{s\to\infty}W(s)>0
\]
and for some $\beta>0$
\[
\lim_{s\to\infty}W'(s)s^{1+\beta}=0,
\]
 then there exists a nonnegative radial function $v:\mathbb{R}^{N}\backslash B_{\rho}\to\mathbb{R}$
such that
\[
-\Delta v+Wv=0\quad\text{in }\mathbb{R}^{N}\backslash B_{\rho},
\]
 and for some $\rho_{0}\in(\rho,\infty)$,
\[
\lim_{|x|\to\infty}v(x)|x|^{\frac{N-1}{2}}\exp\left(\int_{\rho_{0}}^{|x|}\sqrt{W}\right)=1.
\]

\end{lem*}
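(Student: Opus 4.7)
The plan is to reduce the PDE to a linear second-order ODE in $r=|x|$ via radial symmetry, put it in Schrödinger form, and construct the decaying solution by a Liouville--Green (WKB) integral-equation argument. The radial ansatz $v(x)=v(r)$ turns the equation into
\[
v''(r) + \frac{N-1}{r}\, v'(r) - W(r)\, v(r) = 0, \qquad r > \rho,
\]
and the standard substitution $u(r) := r^{(N-1)/2} v(r)$ puts this in the Schrödinger form
\[
u''(r) = \widetilde W(r)\, u(r), \qquad \widetilde W(r) := W(r) + \frac{(N-1)(N-3)}{4 r^{2}}.
\]
The hypotheses pass to $\widetilde W$: one has $\widetilde W(r) \to W_\infty := \lim_{s\to\infty} W(s) > 0$, and $\widetilde W'(r)\, r^{1+\beta} \to 0$, so in particular $\widetilde W'$ is absolutely integrable on $[R,\infty)$ for any $R > \rho$.

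Next, guided by the Liouville--Green ansatz, I fix $\rho_0$ large enough that $\widetilde W > 0$ on $[\rho_0,\infty)$, set $\Psi(r) := \int_{\rho_0}^{r} \sqrt{\widetilde W(s)}\,ds$, and seek $u$ in the form $u(r) = e^{-\Psi(r)}\, h(r)$. A direct substitution yields
\[
h''(r) - 2\sqrt{\widetilde W(r)}\, h'(r) \;=\; \frac{\widetilde W'(r)}{2\sqrt{\widetilde W(r)}}\, h(r).
\]
Treating this as a first-order linear ODE in $p := h'$ with source proportional to $h$, I multiply by the integrating factor $e^{-2\Psi}$, integrate from $r$ to $\infty$ (the boundary contribution at infinity vanishes by the decay of $e^{-2\Psi}$), and then integrate once more in $r$ after swapping the order of integration and performing one integration by parts. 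This reformulates the problem as a Volterra integral equation
\[
h(r) \;=\; 1 \;+\; \int_{r}^{\infty} K(r,s)\, h(s)\, ds,
\]
whose kernel $K$ depends only on $\widetilde W$ and $\widetilde W'$ (no second derivative appears, so the $C^{1}$ hypothesis on $W$ is enough) and satisfies $\sup_r \int_r^\infty |K(r,s)|\,ds \to 0$ as $r \to \infty$, as a direct consequence of the integrability of $\widetilde W'/\widetilde W$. A contraction mapping on $L^{\infty}([R,\infty))$ for $R$ large then produces the unique bounded solution $h$ with $h(r) \to 1$, and hence $u(r) = e^{-\Psi(r)}(1+o(1))$.

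Finally, to match the claimed normalization, note that $\sqrt{\widetilde W(s)} - \sqrt{W(s)} = O(s^{-2})$ is integrable, so $\Psi(r) - \int_{\rho_0}^{r}\sqrt{W}$ converges to a finite constant $C$ as $r \to \infty$. Combining with $v(r) = r^{-(N-1)/2} u(r)$, rescaling $v$ by $e^{C}$ (equivalently shifting the base point $\rho_0$), and enlarging $\rho$ if necessary so that the positive asymptotic forces $v \ge 0$ on all of $\mathbb{R}^{N}\setminus B_\rho$, one obtains the stated limit equal to $1$. The main obstacle is the contraction step: one must engineer the Volterra formulation so that the kernel involves only $\widetilde W'$ and not $\widetilde W''$ (since only $C^{1}$ regularity is assumed), and then verify that the integrability afforded by $W'(s)\,s^{1+\beta}\to 0$ renders $\sup_r \int_r^\infty |K(r,s)|\,ds$ small at infinity, ensuring both the existence of $h$ and the vanishing of the $o(1)$ correction.
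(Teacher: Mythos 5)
The paper does not prove this lemma at all: it is quoted verbatim from \cite{Schaftingen2013} (Lemma 6.4) and used as a black box, so there is no in-paper argument to compare against. Your Liouville--Green construction is the standard route to such statements and is essentially sound: the reduction $u=r^{(N-1)/2}v$, the substitution $u=e^{-\Psi}h$ with $\Psi=\int_{\rho_0}^{r}\sqrt{\widetilde W}$, the Volterra equation $h=1+\int_r^\infty K(r,s)h(s)\,ds$ with kernel $K(r,s)=\frac{\widetilde W'(s)}{2\sqrt{\widetilde W(s)}}e^{-2\Psi(s)}\int_r^s e^{2\Psi(t)}\,dt$ controlled by $\bigl|\int_r^s e^{2\Psi}\bigr|\le e^{2\Psi(s)}/(2\inf\sqrt{\widetilde W})$, and the final absorption of $\int(\sqrt{\widetilde W}-\sqrt{W})=O(\int s^{-2})$ into a multiplicative constant all check out, and you are right that this formulation avoids $\widetilde W''$ so that $C^1$ regularity suffices. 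Two small points deserve explicit mention. First, $\widetilde W'(r)r^{1+\beta}\to 0$ fails for $\beta\ge 2$ because of the $r^{-3}$ term coming from $(N-1)(N-3)/(4r^2)$; you should either replace $\beta$ by $\min(\beta,1)$ (harmless, since the hypothesis is monotone in $\beta$) or simply observe that all you use is absolute integrability of $\widetilde W'/\sqrt{\widetilde W}$ near infinity, which does hold. Second, your construction produces $v>0$ only on $[R,\infty)$ for $R$ large; continuing the solution backward to the original $\rho$ by the linear ODE is unproblematic, but nonnegativity on all of $\mathbb{R}^N\setminus B_\rho$ is not guaranteed if $W$ (or $\widetilde W$) is negative somewhere in the middle, so strictly speaking you prove the lemma with $\rho$ replaced by a possibly larger radius. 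That weaker form is exactly what the paper invokes (``for $\rho$ large enough''), so nothing downstream is affected, but it is worth stating that you are proving this variant rather than the literal claim.
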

Our theorem now is a consequence of the above lemma and the previous
estimates on $V$ and $Q$.

\textbf{Proof of theorem \ref{thm:Asymptotic behavior}. }Let $W=1-\epsilon V-f(Q),$
from the estimates on $V$ and $Q$ one sees $f(Q(x))$ owns exponential
decay by the assumption on $f$. Thus $W=1-\frac{\epsilon\alpha}{r}+O(r^{-2})$
as $r\to\infty$ with $\alpha=\frac{1}{4\pi}\int_{\mathbb{R}^{3}}|Q|^{2}dx$
and $\lim_{s\to\infty}W'(s)s^{1+1/2}=0$. Therefore, according the
above lemma, for $\rho$ large enough, there exists a nonnegative
radial function $v:\mathbb{R}^{3}\backslash B_{\rho}\to\mathbb{R}$
such that
\[
-\Delta v+Wv=0\quad\text{in }\mathbb{R}^{3}\backslash B_{\rho},
\]
 and for some $\rho_{0}\in(\rho,\infty)$,
\[
\lim_{|x|\to\infty}v(x)|x|^{\frac{N-1}{2}}\exp\left(\int_{\rho_{0}}^{|x|}\sqrt{W}\right)=1.
\]
 We show that $Q(x)=cv(x)$ for some constant $c>0$. In fact, since
both $Q$ and $v$ solves the same equation in $\mathbb{R}^{3}\backslash B_{\rho}$,
we see that
\[
\int_{\mathbb{R}^{N}\backslash B_{\rho}}(-\Delta Q+WQ)vdx=\int_{\mathbb{R}^{N}\backslash B_{\rho}}\left(\nabla Q\nabla v+WQv\right)dx+4\pi\rho^{2}Q'(\rho)v(\rho)
\]
 and
\[
\int_{\mathbb{R}^{N}\backslash B_{\rho}}(-\Delta v+Wv)Qdx=\int_{\mathbb{R}^{N}\backslash B_{\rho}}\left(\nabla Q\nabla v+WQv\right)dx+4\pi\rho^{2}Q(\rho)v'(\rho).
\]
 Hence $Q'v=v'Q$, which implies that $(v/Q)'=0$. Hence $Q=cv$ for
some positive constant $c$. Therefore
\[
\lim_{|x|\to\infty}Q(x)|x|\exp\left(\int_{\rho_{0}}^{|x|}\sqrt{W}\right)\in(0,\infty).
\]
Substituting $W=1-\frac{\epsilon\alpha}{r}+O(r^{-2})$ one gets that
\[
\lim_{|x|\to\infty}Q(x)|x|^{1-\epsilon\alpha/2}e^{|x|}\in(0,\infty).
\]
 This finishes the proof.

\bibliographystyle{plain}
\bibliography{Schrodinger-Poisson-Slater}

\end{document}